\documentclass[letterpaper, 10 pt, conference]{ieeeconf}  

\IEEEoverridecommandlockouts                              

\usepackage{amsmath}
\usepackage{amsmath,amssymb,amsfonts} 
\usepackage{graphicx}
\usepackage{algorithm}
\usepackage{algpseudocode}
\usepackage{hyperref}
\hypersetup{hidelinks=true}
\usepackage{textcomp}
\usepackage{epstopdf}
\usepackage{booktabs} 
\usepackage{hyperref}
\usepackage{amsfonts}       
\usepackage{multicol}
\usepackage{multirow} 
\usepackage{subcaption}
\usepackage{xcolor}

\usepackage{amsthm}

\newcommand{\R}{\mathbb{R}}

\newcommand{\bmat}[1]{\begin{bmatrix}#1\end{bmatrix}}

\newcommand{\ip}[2]{\langle #1, #2 \rangle}
\newcommand{\mcl}[1]{\mathcal{#1}}
\newcommand{\mbf}[1]{\mathbf{#1}}

\newcommand{\C}{\mathbb{C}}
\newcommand{\N}{\mathbb{N}}

\theoremstyle{plain}

\newtheorem{thm}{Theorem}

\newtheorem{lem}[thm]{Lemma}

\newtheorem{cor}[thm]{Corollary}

\title{\LARGE \bf
Stable Rational Approximation of PDE Transfer Functions with $H_\infty$ Error Bounds
}

\author{Aleksandr Talitckii and Matthew Peet$^1$
\thanks{This work was supported by the National Science Foundation under
grants NSF CCF-2323532}
\thanks{$^{1}$Aleksandr Talitckii and Matthew M. Peet are with the School for the Engineering of Matter, Transport and Energy, Arizona State University, Tempe, AZ, 85298, USA
        {\tt\small atalitck@asu.edu} and {\tt\small mpeet@asu.edu}} 
}

\begin{document}

\maketitle
\thispagestyle{empty}
\pagestyle{empty}

\begin{abstract}
 Transfer functions of Partial Differential Equations (PDEs) are irrational and difficult to obtain. Unlike for rational transfer functions, only a few methods are available for the control and analysis of irrational transfer functions. Thus, for robust analysis and control of PDEs, we need to use a rational approximation of the transfer function, preferably with provable $H_\infty$ error bounds. In this paper, we extend Krylov subspace model reduction to infinite-dimensional systems modeled by Partial Integral Equations (PIEs) -- an equivalent representation of PDEs. First, we propose a state transformation of the PIE system that allows us to compute a controllability matrix. Then, to obtain a rational approximation, we project the PIE states onto the vector space spanned by the truncated controllability matrix. Next, we show that sufficient conditions for exponential stability and well-posedness of the PIE system guarantee the exponential stability of the reduced-order system and provide constructive finite-frequency $H_\infty$ error bounds. Finally, we provide four illustrative numerical examples to demonstrate the performance of the proposed method.

\end{abstract}

\section{Introduction}\label{sec:introduction}


The transfer function provides a fundamental link between the state-space representation of a linear system and the system input-output behavior. For Ordinary Differential Equations (ODEs), transfer functions are rational, and there exist extensive toolsets for their analysis and control~\cite{desoer2009feedback}. Unfortunately, transfer functions for Partial Differential Equations (PDEs) are irrational, and methods for rational transfer functions do not extend readily to PDE transfer functions.  This motivates the development of methods for rational approximation of transfer functions for PDEs.

Although numerous methods exist for obtaining rational transfer function approximations for PDEs, these methods do not typically admit constructive a priori error bounds (i.e., in the $H_\infty$ sense)~\cite{curtain2009transfer}. Such error bounds are necessary when robust analysis and control methods are used to certify stability and design controllers with provable performance based on the rational approximation. That is, the unmodeled dynamics become a norm-bounded uncertainty with norm bounded by the truncation error in the rational approximation. Thus, a rational approximation of the PDE transfer function with constructive a priori error bounds is required for robust analysis and control methods.

Before turning to the literature on rational approximation, let us first examine cases when analytic representations of irrational transfer functions exist and are possible to obtain. The Laplace transform can be used to show the existence of the transfer functions for LTI systems~\cite{curtain2020introduction}. Then, to obtain such transfer functions of PDEs, we need to find an inverse of a differential operator by solving a two-point boundary value problem~\cite{ jovanovic2003exact}. However, the derivation often becomes intractable for PDEs because of spatially varying coefficients and complex boundary conditions. Therefore, analytic transfer functions are available only for a few special cases~\cite{zwart2004transfer}.
 
Even when analytic representations of an irrational transfer function are available, such expressions are not compatible with the well-developed toolset for analysis and control of rational and state-space systems. The most common alternative, then, is to discretize the PDE by projecting solutions onto a finite-dimensional basis such as the Chebyshev polynomials. Although the projection allows us to construct rational approximations,  discretization approaches may lose dissipativity properties~\cite{harkort2012stability} or  introduce spurious modes through the numerical treatment of boundary conditions~\cite{karam2024implicit}. Hence, the discretized system may not be stable even if the original PDE is stable. Furthermore, without additional assumptions, $H_\infty$ error bounds on the unmodeled dynamics are generally unavailable~\cite{selivanov2025improved}, limiting the applicability of the result to robust control and stability analysis.

A possible way to construct a rational approximation with $H_\infty$ error bounds is to generalize finite-dimensional model reduction methods to infinite-dimensional systems. A generalization of balanced model truncation approaches may provide theoretical $H_\infty$ error bounds under suitable assumptions~\cite{guiver2014model}. However, computing the balanced realization is difficult for infinite-dimensional systems. Thus, practical implementations require Galerkin or finite-element discretization methods~\cite{Singler2011BalancedPF}, so the convergence of such discretizations requires separate analysis. To avoid computation of the balanced realization, we may consider Krylov subspace model reduction~\cite{freund2003model}. For ODEs, the Krylov subspace model reduction uses a truncated controllability or observability matrix to project the states onto the low-dimensional subspace. However, infinite-dimensional Krylov subspace model reduction methods~\cite{harkort2010krylov} do not generally guarantee stability of the reduced-order model or provide $H_\infty$ error bounds. 

To construct a stable rational approximation of PDE transfer functions with provable finite-frequency $H_\infty$ bounds, we extend Krylov subspace model reduction to systems modeled by Partial Integral Equations (PIEs). Under suitable boundary conditions, the PIE formulation creates a bijection between the PDE states, $\mbf v(t, \theta)$, and the PIE states, $\mbf x(t,\theta):= \partial_{\theta}^n \mbf v(t, \theta)$, defined by a Partial Integral (PI) operator  $\mcl T$ as $\mbf v(t, \theta) = (\mcl T \mbf x)(t, \theta)$~\cite{peet2021partial}.

Then, the input-output PIE system has the form \vspace{-1.5mm} 
\begin{equation}\label{eqn:PIE_intro}
    \partial_t(\mcl T \mbf x)(t)\hspace{-0.7mm} =\hspace{-0.7mm}\mcl A\mbf x(t)\hspace{-0.7mm}+\hspace{-0.7mm}\mcl Bu(t), \quad  y(t) \hspace{-0.7mm}=\hspace{-0.7mm} \mcl C \mbf x(t) \hspace{-0.7mm}+\hspace{-0.7mm} \mcl D u(t),\vspace{-1.5mm}
\end{equation}
where $\mcl T, \mcl A, \mcl B,\mcl C,\mcl D$ are PI operators, $u(t)$ and $y(t)$ are the inputs and outputs.

The transfer function of the PIE system in~\eqref{eqn:PIE_intro} can be defined as $\hat G(s) =\mcl C(s\mcl T - \mcl A)^{-1}\mcl B + \mcl D$. 
However, applying  Krylov subspace model reduction directly to the PIE in~\eqref{eqn:PIE_intro} is still challenging, since it includes the inverse of $s \mcl T - \mcl A$ and the controllability matrix cannot be constructed without inverting $\mcl T$.

To address these challenges, we choose some $\sigma \in \R$ such that $\sigma \mcl T - \mcl A$ is invertible. Then, using the variable substitution $\mbf z(t):= (\mcl A \hspace{-0.2mm}-\hspace{-0.2mm} \sigma \mcl T) \mbf x(t)$, we transform the PIE system into\vspace{-1.5mm}
\begin{align*}
    \partial_t(\mcl R \mbf z)(t) \hspace{-0.9mm}=\hspace{-0.9mm}(I \hspace{-0.9mm}+\hspace{-0.9mm} \sigma\mcl R)\mbf z(t)\hspace{-0.9mm}+\hspace{-0.9mm}\mcl Bu(t),\quad  y(t)\hspace{-0.9mm}=\hspace{-0.9mm} \hat{\mcl C} \mbf z(t) \hspace{-0.9mm}+\hspace{-0.9mm} \mcl D u(t),\\[-7mm]
\end{align*}
where $\mcl R:= \mcl T(\mcl A - \sigma \mcl T)^{-1}$ and $\hat{\mcl C}:= \mcl C(\mcl A-\sigma \mcl T)^{-1}$.

The transformed system allows us to construct the truncated controllability matrix as $C_S = \bmat{\mcl B, \mcl R\mcl B, \dots, \mcl R^{r-1} \mcl B}$. Then, projecting the PIE states onto the vector space spanned by the columns of $C_S$ allows us to propose a rational approximation of the PIE transfer function.  Next, using sufficient conditions for well-posedness and  exponential stability of the PIE,  we derive finite-frequency $H_\infty$ error bounds and show that the reduced-order system is stable.

We organize the paper as follows. First, we formulate Partial Integral Equations (PIEs) and discuss the transfer functions of PIEs in Sec.~\ref{sec:PIE_problem_formulation}. Then, in Sec.~\ref{sec:transformed_PIE}, we present the transformed PIEs. Next, in Sec.~\ref{sec:Arnoldi_iterations}, we propose a rational approximation of the transfer function. In Sec.~\ref{sec:error_bounds}, we derive $H_\infty$  error bounds.  In Sec.~\ref{sec:PDE_transfer_function}, we show a relationship between PDE and PIE transfer functions. We conclude the paper by presenting numerical examples in Sec.~\ref{sec:numerical_examples}.

 \subsection*{Notation}
We use $\N,\R,\C$ to denote the natural numbers, the real numbers and the complex numbers. For $i,j \in \N$, we use $\delta_{ij}$ to denote the Kronecker delta function -- $\delta_{ij} = 1$ if $i = j$, and $0$ otherwise. For $x\in\C^n$, we use $x^\dagger$ to denote the conjugate transpose of $x$. For $\Omega \subseteq \R^n$, we denote by $L^n_2(\Omega, \C)$  the space of  Lebesgue square-integrable complex-valued functions $f:
\Omega\rightarrow \C^n$ with  inner product $\ip{f}{g} = \int_\Omega f^\dagger(s) g(s) ds$. For a given Hilbert space $H$, we denote by $I:H\rightarrow H$ and $0:H\rightarrow H$ the identity operator and the null operator. For given $a, b\in \R$ and $n_1,n_2 \in \N$, we denote by $\mbf Z^{n_1, n_2}$ the Hilbert space $\mbf Z^{n_1, n_2}:= \C^{n_1} \times L_2([a, b], \C^{n_2})$ with  inner product $\ip{\bmat{x_1 \\ \mbf x_2}}{ \bmat{y_1 \\ \mbf y_2}}_{\mbf Z} := x_1^\dagger y_1 + \ip{\mbf x_2}{\mbf y_2}_{L_2}$.

\section{Transfer Function of Partial Integral Equations}
\label{sec:PIE_problem_formulation}

In this paper, we consider an input-output map modeled by complex-valued Partial Differential Equations, which can be represented by Partial Integral Equations (PIEs). We initially consider the problem of approximating transfer functions of  PIE systems, which allows us to derive $H_\infty$ error bounds. Then, the application of the results to PDE transfer functions is discussed in Sec.~\ref{sec:PDE_transfer_function}. Specifically, consider a PIE system \vspace{-1.5mm}
\begin{align}\label{eqn:PIE}
    \partial_t(\mcl T \mbf x)(t) & = \mcl A \mbf x(t) + \mcl B u(t), \qquad \mbf x(0)= 0 \\
    y(t) & = \mcl C \mbf x(t) + \mcl Du(t), \notag \\[-7mm]\notag
\end{align}
where $\mbf x(t) \in \mbf Z:=  \C^n \times L_2([a, b], \C^m)$ is the state, $u(t) \in \C$, $y(t) \in \C$ are the inputs and outputs of the system, $\mcl T, \mcl A:\mbf Z\rightarrow\mbf Z$, $\mcl C:\mbf Z\rightarrow \C$, $\mcl B \in \mbf Z$ and $\mcl D \in \C$ are complex-valued Partial Integral (PI) operators.

Complex-valued PI operators are a slight modification of real-valued PI operators discussed in~\cite{peet2021partial}. Specifically, given  a matrix $P \in \C^{n\times n}$, bounded complex-valued functions  $Q_1(\theta), Q_2(\theta), R_0(\theta)$ and $R_1(\theta, \nu)$, we say $\mcl P$ is a Partial Integral (PI) operator on $\mbf Z$ if \vspace{-3.5mm}

{\small
\begin{equation}\label{eqn:PIoperator}
\bigg(\mcl P \hspace{-1mm}\bmat{P\hspace{-1mm} &\hspace{-1mm} Q_1 \\ Q_2 \hspace{-1mm}&\hspace{-1mm} \{R_i\}}\hspace{-1mm} \bmat{x \\ \Phi}\hspace{-1mm}\bigg)(\theta) = \bmat{P x + \int_a^b Q_1(\theta)\Phi(\theta)d\theta \\ 
Q_2(\theta)x + (\mcl P_{R_i} \Phi)(\theta) },\vspace{-1.5mm}
\end{equation}}
where $(\mcl P_{R_i} \Phi)(\theta) = R_0(\theta)\Phi(\theta) + \int_a^b R_1(\theta, \nu)\Phi(\nu) d\nu.$

We define the transfer function of the PIE system  as\vspace{-1.5mm}
\begin{equation}\label{eqn:transferfunction_PIE}
    \hat G(s) = \mcl C(s\mcl T - \mcl A)^{-1} \mcl B + \mcl D.\vspace{-1.5mm} 
\end{equation}
Our goal is to approximate the transfer function for $s \in \{s \in \C\; |\; Re(s) > -\omega\}$, where $\omega$ is an exponential decay rate of the $C_0$-semigroup generated by $A:=\mcl A \circ \mcl T^{-1}$. The particular choice of $\omega$ and a connection between the PIE and PDE transfer functions are discussed in Sec.~\ref{sec:PDE_transfer_function}.

\section{Transformed Partial Integral Equations} \label{sec:transformed_PIE}

In this section, we propose a transformation of the PIE system in~\eqref{eqn:PIE}, which allows us to construct a controllability matrix and to reformulate the inverse of $(s \mcl T - \mcl A)$ as a problem of finding the resolvent of a bounded linear operator $\mcl R$. For this task, we assume that there exists $\sigma \in \R$ such that $\mcl A - \sigma \mcl T $ is invertible -- a condition for well-posedness test of the PIE system in~\cite{jagt2026verifying}. 

Using the invertibility of $\mcl A - \sigma \mcl T $, we may define the variable substitution as $\mbf z(t) = (\mcl A - \sigma \mcl T) \mbf x(t)$. Then, the variable substitution transforms the PIE system in~\eqref{eqn:PIE} to \vspace{-1.5mm}
\begin{align}\label{eqn:PIE_transformed}
    \partial_t(\mcl R \mbf z)(t) & = (I + \sigma \mcl R)\mbf z(t) + \mcl B u(t) \\
    y(t) & = \hat{\mcl C} \mbf z(t) + \mcl Du(t), \notag\\[-7mm]\notag
\end{align}
where $\mcl R:= \mcl T(\mcl A - \sigma \mcl T)^{-1}$ and $\hat{\mcl C} := \mcl C(\mcl A- \sigma \mcl T)^{-1}$,
and where we use the fact that\vspace{-1.5mm}
\begin{align*}
\mcl A\mbf x(t) & =\mcl A(\mcl A -\sigma \mcl T)^{-1} \mbf z(t) = (I + \sigma \mcl T(\mcl A - \sigma\mcl T)^{-1})\mbf z(t)\\
\mcl T \mbf x(t) &= \mcl T(\mcl A - \sigma \mcl T)^{-1}\mbf z(t) = \mcl R \mbf z(t). \\[-7mm]
\end{align*}
The transfer function of the PIE in Eqn.~\eqref{eqn:PIE_transformed} becomes\vspace{-1.5mm}
\begin{equation}\label{eqn:transferfunction_transformed_PIE}
    \hat G(s) = \hat{\mcl C}((s - \sigma)\mcl R - I)^{-1} \mcl B + \mcl D\vspace{-1.5mm}
\end{equation}
The equivalence between the transfer functions in~\eqref{eqn:transferfunction_PIE} and in~\eqref{eqn:transferfunction_transformed_PIE} is established as follows. 
\begin{lem}\label{lem:transformed_PIE_equivalence}
    Let $\mbf X$ be a complex Hilbert space and $\mcl A:\mbf X\rightarrow \mbf X$, $\mcl B:\C\rightarrow \mbf X$, $\mcl C:\mbf X\rightarrow\C$, $\mcl D:\C\rightarrow\C$ and $\mcl T:\mbf X\rightarrow \mbf X$ be bounded linear operators. Suppose there exists $\sigma \in \R$ such that $\mcl A - \sigma \mcl T$ has a bounded inverse.    
    Then, for any $s \in \C$ such that $s \mcl T - \mcl A$ is invertible, we have \vspace{-1.5mm}
    \[
    \mcl C(s \mcl T - \mcl A)^{-1}\mcl B + \mcl D = \hat{\mcl C}((s - \sigma)\mcl R - I)^{-1} \mcl B + \mcl D, \vspace{-1.5mm}
    \]
where $\mcl R:= \mcl T(\mcl A - \sigma \mcl T)^{-1}$ and $\hat{\mcl C} := \mcl C(\mcl A- \sigma \mcl T)^{-1}$.
    
\end{lem}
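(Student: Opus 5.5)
The approach is a direct algebraic factorization: write $s\mcl T - \mcl A$ as a product involving $\mcl A - \sigma\mcl T$ and $(s-\sigma)\mcl R - I$, then invert. Set $\mcl M := \mcl A - \sigma\mcl T$, which has a bounded inverse by hypothesis. The key identity is
\begin{align*}
s\mcl T - \mcl A = (s-\sigma)\mcl T - (\mcl A - \sigma\mcl T) = \big((s-\sigma)\mcl T\mcl M^{-1} - I\big)\mcl M = \big((s-\sigma)\mcl R - I\big)\mcl M .
\end{align*}
This follows from adding and subtracting $\sigma\mcl T$ and then right-factoring $\mcl M$ using $\mcl M^{-1}\mcl M = I$.

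Next I would show that $(s-\sigma)\mcl R - I$ is invertible whenever $s\mcl T - \mcl A$ is. From the factorization, $(s-\sigma)\mcl R - I = (s\mcl T - \mcl A)\mcl M^{-1}$, which is a composition of two invertible operators. Its inverse is therefore $\mcl M (s\mcl T-\mcl A)^{-1}$. This step is where a little care is needed, since we must argue invertibility in the right direction rather than assume it.

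Taking inverses of the factorization then gives $(s\mcl T-\mcl A)^{-1} = \mcl M^{-1}\big((s-\sigma)\mcl R - I\big)^{-1}$. Multiplying on the left by $\mcl C$ and on the right by $\mcl B$, and adding $\mcl D$, yields
\begin{align*}
\mcl C(s\mcl T-\mcl A)^{-1}\mcl B + \mcl D = \mcl C\mcl M^{-1}\big((s-\sigma)\mcl R - I\big)^{-1}\mcl B + \mcl D .
\end{align*}
The right-hand side equals $\hat{\mcl C}((s-\sigma)\mcl R - I)^{-1}\mcl B + \mcl D$ by the definition of $\hat{\mcl C}$, which is the claim.

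There is no substantial obstacle. The only points to check are that the inverses are bounded, so the operators are well defined, and that the factor $\mcl M$ appears on the correct side, namely on the right. That placement is consistent with the substitution $\mbf z = \mcl M\mbf x$.
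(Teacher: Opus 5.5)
Your proposal is correct and follows the paper's proof: you use the same factorization $s\mcl T - \mcl A = ((s-\sigma)\mcl R - I)(\mcl A - \sigma\mcl T)$, then invert it and multiply by $\mcl C$ on the left and $\mcl B$ on the right. Your explicit check that $(s-\sigma)\mcl R - I = (s\mcl T - \mcl A)(\mcl A - \sigma\mcl T)^{-1}$ is invertible is a small but welcome addition, since the paper takes that step for granted.
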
\vspace{-1.5mm}
\begin{proof}
    In this proof, we show that $C(s \mcl T - \mcl A)^{-1} =  \hat{\mcl C} ((s - \sigma)\mcl R - I) ^{-1}$.   
    
    For all $s \in \C$, we have that \vspace{-1.5mm}
    \begin{align*}
    s\mcl T \hspace{-0.35mm}-\hspace{-0.35mm} \mcl A& = (s\hspace{-0.35mm}-\hspace{-0.35mm}\sigma)\mcl T\hspace{-0.35mm} -\hspace{-0.35mm}(\mcl A \hspace{-0.35mm}-\hspace{-0.35mm} \sigma \mcl T) =((s\hspace{-0.35mm}-\hspace{-0.35mm}\sigma)\mcl R \hspace{-0.35mm}-\hspace{-0.35mm} I)(\mcl A \hspace{-0.35mm}-\hspace{-0.35mm} \sigma \mcl T). \\[-7mm]
    \end{align*}
    Next, if $s \mcl T - \mcl A$ is invertible, we have \vspace{-1.5mm}
    \[
    (s \mcl T - \mcl A)^{-1} = (\mcl A - \sigma \mcl T)^{-1} ((s - \sigma)\mcl R - I)^{-1}. \vspace{-1.5mm}
    \]
    Thus, since $\hat{\mcl C} := \mcl C(\mcl A- \sigma \mcl T)^{-1}$, we have \vspace{-1.5mm}
    \[
    \mcl C(s \mcl T - \mcl A)^{-1}\mcl B + \mcl D =   \hat{\mcl C}((s - \sigma)\mcl R - I)^{-1} \mcl B + \mcl D.\vspace{-1.5mm}
    \]
    for all $s \in \C$ such that $s \mcl T - \mcl A$ is invertible.
\end{proof}\vspace{-1.5mm}
The advantage of the transfer function in Eqn.~\eqref{eqn:transferfunction_transformed_PIE} is that the descriptor and state operators are described by $\mcl R$. However, even if $\mcl A, \mcl T$ are PI operators, the inverse of $\mcl A - \sigma \mcl T$ may not be a PI operator. In this case, we should approximate the inverse from a set of PI operators -- See Sec.~\ref{sec:PDE_transfer_function}. 
 
\section{A Rational Approximation of PIE Transfer functions }\label{sec:Arnoldi_iterations}

In this section, we propose a rational approximation of the transfer function in  Eq.~\eqref{eqn:transferfunction_PIE}. We accomplish the task in two steps. First, in Subsec.~\ref{subsec:resolvent_approximation}, we consider the problem of approximating the action of the shifted resolvent operator of a bounded linear operator $\mcl R: \mbf X\rightarrow \mbf X$ on an element $\mcl B \in \mbf X$. For this task, we use the subspace spanned by a controllability matrix of the PIE system in~\eqref{eqn:PIE_transformed}. Next, using the resolvent approximation, in Subsec.~\ref{subsec:rational_approximation}, we propose a rational approximation of the transfer function in~\eqref{eqn:transferfunction_transformed_PIE}.

 \subsection{ Controllability Subspace for a Resolvent Approximation}\label{subsec:resolvent_approximation}
Let us start by the resolvent approximation. For this task, we consider the controllability matrix for the  PIE system in~\eqref{eqn:PIE_transformed}. Recall that for the state operator $\mcl R: \mbf X\rightarrow \mbf X$ and the input operator $\mcl B \in \mbf X$, the controllability matrix is defined as \vspace{-1.5mm}
\begin{equation}\label{eqn:controllabilitymatrix}
C_S = \bmat{\mcl B & \mcl R \mcl B & \dots & \mcl R^n \mcl B & \dots}.\vspace{-1.5mm}
\end{equation}
Unlike for finite-dimensional systems, the controllability matrix in~\eqref{eqn:controllabilitymatrix} may have an infinite number of linearly independent columns. Due to computational limitations, we consider only a finite number of columns of the controllability matrix.

The first columns of the operator in~\eqref{eqn:controllabilitymatrix} define the Krylov subspace spanned by operators $\mcl R$ and $\mcl B$\vspace{-1.5mm}
\begin{equation}\label{eqn:Krylov_subspace}
    \mcl K_r(\mcl R, \mcl B) = \texttt{span}\{\mcl B, \mcl R\mcl B, \dots , \mcl R^{r-1}\mcl B\}.\vspace{-1.5mm}
\end{equation} 
Krylov subspace methods can construct an approximation of operator functions. Specifically, let $\mbf q_1, \dots, \mbf q_r$ be an orthonormal basis of $\mcl K_r(\mcl R, \mcl B)$. 
Then, the approximation of $(I - s\mcl R)^{-1} \mcl B$ is given as  \vspace{-1.5mm}
\begin{equation}\label{eqn:resolvent_approximation}
(I - s\mcl R)^{-1} \mcl B \approx \mbf V (I - sH)^{-1} \mbf V^*\mcl B,\vspace{-1.5mm}
\end{equation}
where $\mbf V :\C^r \rightarrow \mbf X$ and $H \in \C^{r \times r}$ are defined as \vspace{-1.5mm}
\begin{equation}\label{eqn:H_VRV}
H_{ij} = \ip{\mbf q_i}{\mcl R \mbf q_j}_{\mbf X} \qquad \mbf V \alpha := \sum\nolimits_{i = 1}^r \alpha_i \mbf q_i \vspace{-1.5mm} 
\end{equation} 

The error bounds for the approximation are established as follows. 
\begin{thm}[Thm 4.10 in~\cite{guttel2010rational}]\label{thm:function_approximation_KS} Let $\mbf X$ be a complex Hilbert space, $\mcl B \in \mbf X$ and $\mcl R:\mbf X \rightarrow \mbf X$ be a bounded linear operator. Define \vspace{-4mm}
\begin{equation}\label{eqn:Sigma_set}
    \Sigma = cl\big(\{\ip{\mbf x}{\mcl R \mbf x}_{\mbf X} \; | \; \mbf x \in \mbf X\; ,\|\mbf x\|_{\mbf X} = 1\}\big).\vspace{-1.5mm}
\end{equation}
Let $\{\mbf q_i\}_{i = 1}^r \subset \mbf X$ be an orthonormal basis of $\mcl K_r(\mcl R, \mcl B )$ and $H, \mbf V$ be defined as in~\eqref{eqn:H_VRV}.

Then, there exists $K \geq 0$ such that for all $s \in \C$ if $s^{-1} \hspace{-0.5mm} \notin \hspace{-0.5mm}  \Sigma$\vspace{-1.5mm}
\begin{align}
&\|(I - s\mcl R)^{-1} \mcl B - \mbf V (I - s H)^{-1} \mbf V^* \mcl B\|_{\mbf X} \\[-1mm]
&\qquad\qquad \leq K\|\mcl B\|_{\mbf X} \min_{p \in \C_r[z]} \max_{z \in \Sigma} \big| \frac{1}{1 - s z} - p (z)\big|,\notag\\[-7mm]\notag
\end{align}
where $\C_r[z] = \{p(z) = \sum_{i = 0}^{r- 1} \alpha_i z^{i}\; |\; \alpha_i \in \C\}$.
\end{thm}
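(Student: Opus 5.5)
The plan is the standard Krylov-projection argument: exactness on polynomials of degree $<r$, followed by a Crouzeix-type numerical-range bound applied to both $\mcl R$ and its compression $H$.

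\textbf{Exactness on polynomials.} Set $P:=\mbf V\mbf V^*$, the orthogonal projector onto $\mcl K_r(\mcl R,\mcl B)$. Then $H=\mbf V^*\mcl R\mbf V$ is the compression of $\mcl R$. First I show by induction on $k$ that, for $k\le r-1$,
\[
\mbf V H^k\mbf V^*\mcl B=\mcl R^k\mcl B .
\]
The case $k=0$ holds because $\mcl B\in\mcl K_r$, so $\mbf V\mbf V^*\mcl B=\mcl B$. For the inductive step, the hypothesis gives $\mbf V H^{k-1}\mbf V^*\mcl B=\mcl R^{k-1}\mcl B$. Hence
\[
\mbf V H^k\mbf V^*\mcl B = P\mcl R\,\mbf V H^{k-1}\mbf V^*\mcl B = P\mcl R^k\mcl B = \mcl R^k\mcl B,
\]
where the last equality uses $\mcl R^k\mcl B\in\mcl K_r$ for $k\le r-1$. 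By linearity, $p(\mcl R)\mcl B=\mbf V p(H)\mbf V^*\mcl B$ for every $p\in\C_r[z]$.

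\textbf{Error splitting.} Write $f_s(z)=(1-sz)^{-1}$. For any $p\in\C_r[z]$, exactness gives
\[
f_s(\mcl R)\mcl B-\mbf V f_s(H)\mbf V^*\mcl B=(f_s-p)(\mcl R)\mcl B-\mbf V (f_s-p)(H)\mbf V^*\mcl B .
\]
Since $\|\mbf V\|\le1$ and $\|\mbf V^*\mcl B\|\le\|\mcl B\|$, the error is at most
\[
\big(\|(f_s-p)(\mcl R)\|+\|(f_s-p)(H)\|\big)\|\mcl B\|_{\mbf X}.
\]
Two well-posedness points need checking. The numerical range of $H$ lies in $\Sigma$, because $\ip{\mbf V\alpha}{\mcl R\mbf V\alpha}=\alpha^\dagger H\alpha$ with $\|\mbf V\alpha\|=\|\alpha\|$. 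The spectra of both $\mcl R$ and $H$ lie in $\Sigma$. So the condition $s^{-1}\notin\Sigma$ guarantees that $f_s$ is analytic on a neighborhood of $\Sigma$ and that both resolvents $(I-s\mcl R)^{-1}$ and $(I-sH)^{-1}$ exist.

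\textbf{Main obstacle.} The hard step is bounding $\|g(\mcl R)\|$ and $\|g(H)\|$, with $g=f_s-p$ analytic near $\Sigma$, by a constant times $\max_{z\in\Sigma}|g(z)|$. I would invoke Crouzeix's theorem, $\|g(M)\|\le(1+\sqrt2)\sup_{W(M)}|g|$, valid for bounded operators on Hilbert spaces with $W(M)$ the numerical range. Apply it to $M=\mcl R$, where $W(\mcl R)\subseteq\Sigma$, and to $M=H$, where $W(H)\subseteq\Sigma$. This yields the claim with $K=2(1+\sqrt2)$, independent of $s$ and $r$.

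A more elementary fallback is to take a Cauchy integral over a contour around $\Sigma$ and bound each resolvent by $\mathrm{dist}(\zeta,\Sigma)^{-1}$. That route gives a $K$ depending on the contour, which is still acceptable since the statement only asserts the existence of some $K\ge0$. Minimizing over $p\in\C_r[z]$ then finishes the proof.
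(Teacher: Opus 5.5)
Your main argument is correct and is essentially the proof of G\"uttel's Theorem~4.10, which the paper simply invokes. It uses exactness of the projected approximation on $\C_r[z]$, then Crouzeix's bound applied to both $\mcl R$ and its compression $H$ (whose numerical range lies in $\Sigma$), giving $K=2(1+\sqrt{2})\le 5$ in line with the paper's later remark. Your use of the Crouzeix--Palencia bound for bounded Hilbert-space operators, together with $\sigma(\mcl R)\subseteq\Sigma$, also spells out the infinite-dimensional justification that the paper's one-line citation leaves implicit.

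Your Cauchy-integral fallback, however, would not prove the statement as written. It controls $\|g(\mcl R)\|$ by $\max|g|$ on the contour rather than on $\Sigma$. And because the contour must avoid the pole $s^{-1}$, its constant blows up as $s^{-1}$ approaches $\Sigma$, so it cannot give a $K$ uniform in $s$. The statement requires that uniformity, as does the supremum over frequencies in Corollary~\ref{cor:finite_Hinf_bounds}.
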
\vspace{-1.5mm} 
\begin{proof}
    Since a function $f(z) := (1 - sz)^{-1}$ is analytic on $\Sigma$ if $s^{-1} \notin \Sigma$, the proof of the theorem is an application of Thm. 4.10 in~\cite{guttel2010rational}.
\end{proof}\vspace{-1.5mm}

Thm.~\ref{thm:function_approximation_KS} shows that if $s^{-1} \notin \Sigma$, the Krylov subspace approximation in~\eqref{eqn:resolvent_approximation} finds a polynomial approximation of degree $r-1$ based on a basis of $\mcl K_r(\mcl R, \mcl B)$.  In the next subsection, we generalize Thm.~\ref{thm:function_approximation_KS} to approximate the transfer function in~\eqref{eqn:transferfunction_transformed_PIE}. 
 
\subsection{A Rational Approximation of Transfer Functions}\label{subsec:rational_approximation}

To approximate the transfer functions in~\eqref{eqn:transferfunction_PIE} and~\eqref{eqn:transferfunction_transformed_PIE}, we use the results of Thm.~\ref{thm:function_approximation_KS}. Specifically, by the resolvent approximation in Eq.~\eqref{eqn:resolvent_approximation}, we may define an approximation of the transfer function in~\eqref{eqn:transferfunction_transformed_PIE} as \vspace{-1.5mm}
\begin{equation}
   \hat G_r(s) = \hat{\mcl C} \mbf V((s - \sigma) H - I)^{-1} \mbf V^* \mcl B + \mcl D, \label{eqn:transfer_function_approximation_PIE_transformed}\vspace{-1.5mm}
\end{equation}
where recall $\hat{\mcl C} = \mcl C(\mcl A - \sigma \mcl T)^{-1}$, $\mcl R =\mcl T(\mcl A - \sigma \mcl T)^{-1}$ and $H, \mbf V$ are defined as in~\eqref{eqn:H_VRV}. 
 
\begin{thm}\label{thm:transfer_function_error_bounds_without_stability}
     Let $\mbf X$ be a complex Hilbert space, $\mcl D\in\C$, $\mcl B \in \mbf X$ and $\mcl T, \mcl A:\mbf X \rightarrow \mbf X$, $\mcl C:\mbf X \rightarrow \C$ be bounded linear operators. Suppose there exists $\sigma \in \C$ such that $\mcl A - \sigma \mcl T$ has a bounded inverse.    
     Let $\mcl R = \mcl T(\mcl A - \sigma \mcl T)^{-1}$ and $\{\mbf q_i\}_{i = 1}^r$ be an orthonormal basis of $\mcl K_r(\mcl R, \mcl B )$. Let $H, \mbf V$ be defined as in~\eqref{eqn:H_VRV}, $\hat G(s)$ as in~\eqref{eqn:transferfunction_PIE} and $\hat G_r(s)$ as in~\eqref{eqn:transfer_function_approximation_PIE_transformed}.
     
     Then, there exists $K \geq 0$ such that \vspace{-2.5mm}
     \begin{align*}
     &|\hat G_r(s) - \hat G(s)| \\[-3mm]
     &\leq K \|\mcl C(\mcl A - \sigma \mcl T)^{-1} \| \|\mcl B\|\hspace{-2mm}\min_{p \in \C_r[z]}  \max_{z \in \Sigma} \bigg| \frac{1}{1 - (s-\sigma) z} - p (z)\bigg|,\\[-7mm]
     \end{align*}
     for all $s \in \{s \in \C\;| \; (s-\sigma)^{-1} \notin \Sigma\}$.
\end{thm}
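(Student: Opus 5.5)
The plan is to reduce the statement to Thm.~\ref{thm:function_approximation_KS}, applied with the shifted variable $s-\sigma$ in place of $s$. The proof has three steps.

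\textbf{Step 1: rewrite $\hat G(s)$.} By Lemma~\ref{lem:transformed_PIE_equivalence}, whenever $s\mcl T-\mcl A$ is invertible we have
\[
\hat G(s)=\hat{\mcl C}((s-\sigma)\mcl R-I)^{-1}\mcl B+\mcl D .
\]
The lemma is stated for real $\sigma$, but its proof is purely algebraic and works verbatim for complex $\sigma$. I would need to check that the hypothesis $(s-\sigma)^{-1}\notin\Sigma$ makes $(s-\sigma)\mcl R-I$ invertible, and hence $s\mcl T-\mcl A$ invertible via the factorization in that proof.

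For invertibility I would use the numerical range. Since $\Sigma$ is closed and convex (Toeplitz--Hausdorff) and contains the spectrum of $\mcl R$, $w:=(s-\sigma)^{-1}\notin\Sigma$ gives $w\notin\sigma(\mcl R)$. Therefore $I-(s-\sigma)\mcl R=(s-\sigma)(w I-\mcl R)$ is boundedly invertible; the case $s=\sigma$ is trivial. Note that
\[
((s-\sigma)\mcl R-I)^{-1}=-(I-(s-\sigma)\mcl R)^{-1}.
\]
The same identity holds in the reduced model: $H=\mbf V^*\mcl R\mbf V$ has numerical range contained in $\Sigma$, so $I-(s-\sigma)H$ is invertible too, and $\hat G_r(s)$ is well defined.

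\textbf{Step 2: isolate the resolvent error.} Subtracting, the $\mcl D$ terms cancel and
\[
\hat G_r(s)-\hat G(s)=-\hat{\mcl C}\Big[\mbf V(I-(s-\sigma)H)^{-1}\mbf V^*\mcl B-(I-(s-\sigma)\mcl R)^{-1}\mcl B\Big].
\]
Since $\hat{\mcl C}:\mbf X\to\C$ is bounded, it follows that
\[
|\hat G_r(s)-\hat G(s)|\le\|\mcl C(\mcl A-\sigma\mcl T)^{-1}\|\,\|(I-(s-\sigma)\mcl R)^{-1}\mcl B-\mbf V(I-(s-\sigma)H)^{-1}\mbf V^*\mcl B\|_{\mbf X}.
\]

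\textbf{Step 3: apply the Krylov bound.} Apply Thm.~\ref{thm:function_approximation_KS} with $s$ replaced by $s-\sigma$. Its hypothesis $(s-\sigma)^{-1}\notin\Sigma$ is exactly the assumption of the present statement. The theorem bounds the bracket by
\[
K\|\mcl B\|\min_{p\in\C_r[z]}\max_{z\in\Sigma}\Big|\frac{1}{1-(s-\sigma)z}-p(z)\Big| .
\]
Combining this with Step 2 gives the claimed bound.

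\textbf{Main obstacle.} The delicate point is that the constant $K$ must be independent of $s$. Thm.~\ref{thm:function_approximation_KS} as quoted asserts a single $K$ valid for all admissible $s$, and I would rely on that: in Güttel's result $K$ is a Crouzeix-type numerical-range constant depending only on $\Sigma$, not on the function. I would also verify that the invertibility argument of Step 1 only needs $\Sigma$ to contain the spectrum, which is standard for bounded operators.
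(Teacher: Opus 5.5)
Your proposal is correct and follows the paper's proof: rewrite $\hat G(s)$ via Lemma~\ref{lem:transformed_PIE_equivalence}, apply Thm.~\ref{thm:function_approximation_KS} with $s-\sigma$ in place of $s$, and pull out the factor $\|\mcl C(\mcl A-\sigma\mcl T)^{-1}\|$. You also check three points that the paper passes over silently. First, $(s-\sigma)^{-1}\notin\Sigma$ forces $s\mcl T-\mcl A$ and $I-(s-\sigma)H$ to be invertible, because spectra lie in the closed numerical range. Second, Lemma~\ref{lem:transformed_PIE_equivalence}'s purely algebraic argument carries over to complex $\sigma$.
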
\vspace{-1.5mm}
\begin{proof}
 The proof of this theorem is an application of Lem.~\ref{lem:transformed_PIE_equivalence} and Thm.~\ref{thm:function_approximation_KS}. Since $\mcl R$ is a bounded linear operator and $\mcl B \in \mbf X$, we have that the conditions of Thm.~\ref{thm:function_approximation_KS} are satisfied. Thus, \vspace{-1.5mm}
 \begin{align*}
&\|(I - (s-\sigma)\mcl R)^{-1} \mcl B - \mbf V (I - (s-\sigma) H)^{-1} \mbf V^* \mcl B\|_{\mbf X} \\
&\qquad\qquad \leq K\|\mcl B\|_{\mbf X} \min_{p \in \C_r[z]} \max_{z \in \Sigma} \bigg| \frac{1}{1 - (s-\sigma) z} - p (z)\bigg|\\[-7mm]
\end{align*}
for all $s \in \{s \in \C\;| \; (s-\sigma)^{-1} \notin \Sigma\}$.

Next, since the conditions of Lem.~\ref{lem:transformed_PIE_equivalence} are satisfied and $\mcl C$ is a bounded linear operator and $\mcl A - \sigma \mcl T$ has a bounded inverse, we conclude that \vspace{-1.5mm}
     \begin{align*}
     &|\hat G_r(s) - \hat G(s)| \\[-3mm]
     &\leq K \|\mcl C(\mcl A - \sigma \mcl T)^{-1} \| \|\mcl B\|\hspace{-2mm}\min_{p \in \C_r[z]} \max_{z \in \Sigma} \big| \frac{1}{1 - (s-\sigma) z} - p (z)\big|. \\[-7mm]
     \end{align*}
for all $s \in \{s \in \C\;| \; (s-\sigma)^{-1} \notin \Sigma\}$.
\end{proof}\vspace{-1.5mm}

Thm.~\ref{thm:transfer_function_error_bounds_without_stability} shows that the controllability subspace allows us to find a rational approximation of transfer functions. However, the convergence of the approximation is guaranteed only on the set $S_\Sigma := \{s \in \C\;| \; (s-\sigma)^{-1} \notin \Sigma\}$, where $\Sigma$ is defined as in~\eqref{eqn:Sigma_set}. In the next section, we use sufficient conditions for well-posedness and exponential stability of the PIE system to  characterize the set $S_\Sigma$ and to derive finite-frequency $H_\infty$ error bounds. %

 \section{Convergence of the Rational Approximation for Well-Posed and Stable PIEs}\label{sec:error_bounds}

In this section, we derive sufficient conditions for extending the error bounds in Thm.~\ref{thm:transfer_function_error_bounds_without_stability} to $s \in \{s \in \C\; |\; Re(s) > - \omega\}$. We accomplish this task in two steps. First, in Subsec.~\ref{subsec:charaterization_sigma} we use dissipativity conditions on the operator $\mcl R$ to characterize the set $\Sigma$ in~\eqref{eqn:Sigma_set}. The dissipativity condition on the operator $\mcl R$ is closely related to exponential stability and well-posedness of the PIE system -- See~\cite{jagt2026verifying}. Next, in Subsec.~\ref{subsec:convergence_exp_stable_and_well_posed}, we extend the convergence results of Thm.~\ref{thm:transfer_function_error_bounds_without_stability} and  derive the $H_\infty$ error bounds.

\subsection{Dissipativity Conditions for Characterization of $\Sigma$}\label{subsec:charaterization_sigma}
Let us find bounds on the set $\Sigma$, defined as  in~\eqref{eqn:Sigma_set}. For this task, let us consider a slight modification of strictly dissipative operators. Specifically, we assume that $\mcl R$ satisfies\vspace{-1.5mm}
\[
\ip{\mbf u}{\mcl R \mbf u}_{\mbf X} + 
\ip{\mcl R \mbf u}{\mbf u}_{\mbf X} \leq - 2\omega\ip{\mcl R \mbf u}{\mcl R \mbf u}_{\mbf X} \vspace{-1.5mm}
\]
for all $\mbf u \in \mbf X$.

This condition allows us to show that the set $\Sigma$ in~\eqref{eqn:Sigma_set} is in the left half-plane. 

\begin{lem}\label{lem:numerical_range}
    Let $\mbf X$ be a complex Hilbert space and $\mcl R:\mbf X \rightarrow\mbf X$ be a bounded linear operator. Suppose that there exists $\omega > 0$ such that \vspace{-1.5mm}
    \begin{equation}\label{eqn:LMI_lemma_numericalrange}
    \ip{\mbf u}{\mcl R \mbf u}_{\mbf X} + 
    \ip{\mcl R \mbf u}{\mbf u}_{\mbf X} \leq - 2\omega\ip{\mcl R \mbf u}{\mcl R \mbf u}_{\mbf X} \vspace{-1.5mm}
    \end{equation}
    for all $u \in \mbf X$. 
    Then, for $\Sigma$ as in~\eqref{eqn:Sigma_set} we have \vspace{-1.5mm}
    \[
    \Sigma \subseteq \bigg\{ z\in\C\;\bigg|\; \bigg|z + \frac{1}{2\omega}\bigg| \leq \frac{1}{2\omega}\bigg\}\vspace{-1mm}
    \]
\end{lem}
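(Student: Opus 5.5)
Since the target set is closed, it suffices to show that every point $z=\ip{\mbf u}{\mcl R\mbf u}_{\mbf X}$ with $\|\mbf u\|_{\mbf X}=1$ lies in the closed disk $\{z\in\C : |z+\frac{1}{2\omega}|\le\frac{1}{2\omega}\}$; the closure $\Sigma$ is then contained in it as well. The disk condition $|z+\frac{1}{2\omega}|^2\le\frac{1}{4\omega^2}$ expands to $|z|^2+\frac{1}{\omega}Re(z)\le 0$, i.e. $2Re(z)\le -2\omega|z|^2$. So the whole proof reduces to establishing this single scalar inequality for the numerical-range points.

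Fix $\mbf u$ with $\|\mbf u\|_{\mbf X}=1$ and set $z=\ip{\mbf u}{\mcl R\mbf u}_{\mbf X}$. The left-hand side of~\eqref{eqn:LMI_lemma_numericalrange} equals $z+\bar z=2Re(z)$, because $\ip{\mcl R\mbf u}{\mbf u}_{\mbf X}=\overline{\ip{\mbf u}{\mcl R\mbf u}_{\mbf X}}$. The right-hand side is $-2\omega\|\mcl R\mbf u\|^2_{\mbf X}$. By Cauchy--Schwarz, $|z|\le\|\mbf u\|_{\mbf X}\|\mcl R\mbf u\|_{\mbf X}=\|\mcl R\mbf u\|_{\mbf X}$. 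Since $\omega>0$, this gives $-2\omega\|\mcl R\mbf u\|^2_{\mbf X}\le -2\omega|z|^2$. Chaining the two estimates yields $2Re(z)\le-2\omega|z|^2$, which is exactly the disk condition.

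The main point requiring care is the direction of the Cauchy--Schwarz step. The hypothesis bounds $Re(z)$ by a negative multiple of $\|\mcl R\mbf u\|^2_{\mbf X}$, and we must replace this by the weaker quantity $|z|^2$. This works because $|z|^2\le\|\mcl R\mbf u\|^2_{\mbf X}$ makes $-2\omega\|\mcl R\mbf u\|^2_{\mbf X}$ the more negative of the two, and it uses $\omega>0$ essentially. The remaining ingredients are the inner-product convention (conjugate-linear in the first argument, per the Notation section) and the closedness of the disk to pass to the closure. Both are routine.
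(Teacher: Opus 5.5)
Your proof is correct and is essentially the paper's argument. The paper expands $\|(\mcl R + (2\omega)^{-1}I)\mbf u\|_{\mbf X}^2$ and uses the hypothesis to obtain the operator bound $\|\mcl R + (2\omega)^{-1}I\|\le(2\omega)^{-1}$, then bounds $|z+(2\omega)^{-1}| = |\ip{\mbf x}{(\mcl R+(2\omega)^{-1}I)\mbf x}_{\mbf X}|$ by that norm; you instead apply Cauchy--Schwarz first, as $|z|\le\|\mcl R\mbf u\|_{\mbf X}$, and work with the scalar inequality directly — the same ingredients in a different order, with the paper's ordering merely recording the slightly stronger intermediate fact that $\mcl R+(2\omega)^{-1}I$ has norm at most $(2\omega)^{-1}$, which the lemma itself does not need.
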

\begin{proof}
    Recall that $\Sigma = cl(\{\ip{\mbf x}{\mcl R \mbf x}_{\mbf X} \; | \; \mbf x \in \mbf X ,\|\mbf x\|_{\mbf X} = 1\})$. We will show for every $z \in \Sigma$, we have $|z+(2\omega)^{-1}| \leq (2\omega)^{-1}$.
    
    Suppose $z \in \{\ip{\mbf x}{\mcl R \mbf x}_{\mbf X} \; | \; \mbf x \in \mbf X\; ,\|\mbf x\|_{\mbf X} = 1\}$, then there exists $\mbf x \in \mbf X$, $\|\mbf x\|_{\mbf X} = 1$, such that $z = \ip{\mbf x}{\mcl R \mbf  x}_{\mbf X}$. Then, since $\|\mbf x\|_{\mbf X} = 1$, we have \vspace{-1.5mm}
    \[
    |z + (2\omega)^{-1}| = |\ip{\mbf x}{ (\mcl R + (2\omega)^{-1} I) \mbf x}_{\mbf X}| \leq \|\mcl R + (2\omega)^{-1} I\|_{\mbf X}. \vspace{-1.5mm}
    \]
    Then, for all $\mbf u \in \mbf X$ we have\vspace{-1.5mm}
    \begin{align*}
    &\big<(\mcl R + (2\omega)^{-1}I)\mbf u,(\mcl R + (2\omega)^{-1}I)\mbf u\big>_{\mbf X}=\ip{\mcl R \mbf u}{\mcl R \mbf u}_{\mbf X}   \\[-1.5mm]
    &\quad+ (2\omega)^{-1}(\ip{\mbf u}{\mcl R \mbf u}_{\mbf X}+ 
    \ip{\mcl R \mbf u}{\mbf u}_{\mbf X}) + (2\omega)^{-2} \ip{\mbf u}{\mbf u}_{\mbf X}.\\[-7mm]
    \end{align*}
    Finally, by~\eqref{eqn:LMI_lemma_numericalrange}, we have \vspace{-1.5mm}
        \[
    \big<\big(\mcl R + (2\omega)^{-1} I\big)\mbf u,\big(\mcl R + (2\omega)^{-1} I\big)\mbf u\big>_{\mbf X} \leq (2\omega)^{-2} \ip{\mbf u}{\mbf u}_{\mbf X}.\vspace{-1.5mm}
    \]
    Thus, $\|\mcl R + \frac{1}{2\omega} I\|_{\mbf X}\leq (2\omega)^{-1}$ and $|z + (2\omega)^{-1}| \leq  (2\omega)^{-1}$. \vspace{1mm}

    Let $\Theta:= \big\{ z\in\C\;\big|\; \big|z +  \frac{1}{2\omega}\big| \leq  \frac{1}{2\omega}\big\}$. Therefore, we conclude that $\big\{\ip{\mbf x}{\mcl R \mbf x}_{\mbf X} \; \big| \; \mbf x \in \mbf X\; ,\|\mbf x\|_{\mbf X} = 1\big\} \subseteq \Theta$. 
    Finally, since $\Theta$ is closed, we conclude that $\Sigma \subseteq \Theta$.
\end{proof}\vspace{-1.5mm}

The condition in~\eqref{eqn:LMI_lemma_numericalrange} with a specific choice of the Hilbert space $\mbf X$ and the operator $\mcl R$ is used for PIE systems to verify exponential stability of the system. In the next subsection, we specify the space $\mbf X$ and generalize the convergence results in Thm.~\ref{thm:transfer_function_error_bounds_without_stability} for $s \in \C$, $Re(s) > -\omega$.

\subsection{Frequency Domain Error Bounds for Rational Approximation of Transfer Function}
\label{subsec:convergence_exp_stable_and_well_posed}

As discussed in~\cite{shivakumar2022dual}, the PIE system in~\eqref{eqn:PIE} is exponentially stable with decay rate $\omega > 0$ if there exists $\mcl P \geq \eta I$ such that \vspace{-4mm}
\begin{equation}\label{eqn:LMI_exp_stability_not_lemma}
\mcl A^* \mcl P \mcl T + \mcl T^*\mcl P \mcl A \leq - 2\omega \mcl T^* \mcl P \mcl T.\vspace{-1.5mm}
\end{equation}

An extension of this result in~\cite{jagt2026verifying} shows that the well-posedness of the PIE system can be verified using the LMI in~\eqref{eqn:LMI_exp_stability_not_lemma}, assuming that there exists $\sigma \in \R$ such that $\mcl A - \sigma \mcl T $ is invertible. We have already used this assumption in Lem.~\ref{lem:transformed_PIE_equivalence} and Thm.~\ref{thm:transfer_function_error_bounds_without_stability}. Furthermore, the exponential stability and the well-posedness of the PIE in~\eqref{eqn:PIE} allow us to find a space $\mbf X$, in which the operator $\mcl R$ is strictly dissipative.

\begin{lem}\label{lem:new_space}
    Let $\mbf Z:= \C^n \times L_2([a, b],\C^m)$, $\mcl A, \mcl T :\mbf Z \rightarrow \mbf Z$ be PI operators. Suppose there exist $\omega, \eta > 0$ and a PI operator $\mcl P = \mcl P^*  \geq \eta I$ such that \vspace{-1.5mm}
    \begin{equation}\label{eqn:LMI_lemma_space}
    \mcl A^* \mcl P \mcl  T + \mcl T^* \mcl P \mcl A \leq - 2\omega\mcl T^* \mcl P \mcl T.\vspace{-1.5mm}
    \end{equation}
    Suppose there exists $\sigma > -\omega$ such that $\mcl A - \sigma \mcl T $ has a bounded inverse.
    Let $\mcl R:= \mcl T(\mcl A - \sigma \mcl T)^{-1}$. Define the Hilbert space $\mbf X$ to be $\C^n\times L_2([a, b],\C^m)$ with the inner product $\ip{\mbf u}{\mbf v}_{\mbf X} = \ip{\mbf u}{\mcl P\mbf v}_{\mbf Z}$. Then, \vspace{-1.5mm}
\[
\ip{\mbf u}{\mcl R \mbf u}_{\mbf X} + 
\ip{\mcl R \mbf u}{\mbf u}_{\mbf X} \leq - 2(\omega + \sigma)\ip{\mcl R \mbf u}{\mcl R \mbf u}_{\mbf X} \vspace{-1.5mm}
\]
for all $\mbf u\in \mbf X$.
\end{lem}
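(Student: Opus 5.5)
The plan is to substitute $\mbf u = (\mcl A - \sigma \mcl T)\mbf x$ and read the LMI in~\eqref{eqn:LMI_lemma_space} in the new coordinates. Since $\mcl A - \sigma\mcl T$ has a bounded inverse, every $\mbf u \in \mbf X$ is of this form for a unique $\mbf x = (\mcl A - \sigma \mcl T)^{-1}\mbf u \in \mbf Z$. With this substitution we have $\mcl R\mbf u = \mcl T\mbf x$ and $\mbf u = \mcl A\mbf x - \sigma\mcl T\mbf x$. Hence every inner product in the claimed inequality becomes a quadratic form in $\mbf x$ involving $\mcl A, \mcl T, \mcl P$.

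First, we note that $\mbf X$ is a Hilbert space. The form $\ip{\mbf u}{\mcl P\mbf v}_{\mbf Z}$ is Hermitian because $\mcl P=\mcl P^*$. It is positive definite and equivalent to $\ip{\cdot}{\cdot}_{\mbf Z}$ because $\eta I\le \mcl P\le \|\mcl P\| I$, where $\mcl P$ is bounded since it is a PI operator. Hence $\mbf X$ is complete.

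Next, we compute each term. We have
\[
\ip{\mbf u}{\mcl R\mbf u}_{\mbf X} = \ip{(\mcl A-\sigma\mcl T)\mbf x}{\mcl P\mcl T\mbf x}_{\mbf Z} = \ip{\mbf x}{\mcl A^*\mcl P\mcl T\mbf x}_{\mbf Z} - \sigma\ip{\mbf x}{\mcl T^*\mcl P\mcl T\mbf x}_{\mbf Z},
\]
using that $\sigma$ is real. Similarly,
\[
\ip{\mcl R\mbf u}{\mbf u}_{\mbf X} = \ip{\mbf x}{\mcl T^*\mcl P\mcl A\mbf x}_{\mbf Z} - \sigma \ip{\mbf x}{\mcl T^*\mcl P\mcl T\mbf x}_{\mbf Z}.
\]
Adding the two gives $\ip{\mbf x}{(\mcl A^*\mcl P\mcl T+\mcl T^*\mcl P\mcl A)\mbf x}_{\mbf Z} - 2\sigma\ip{\mbf x}{\mcl T^*\mcl P\mcl T\mbf x}_{\mbf Z}$. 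Applying~\eqref{eqn:LMI_lemma_space} to the first term bounds the sum by $-2(\omega+\sigma)\ip{\mbf x}{\mcl T^*\mcl P\mcl T\mbf x}_{\mbf Z}$. Finally, $\ip{\mbf x}{\mcl T^*\mcl P\mcl T\mbf x}_{\mbf Z} = \ip{\mcl T\mbf x}{\mcl P \mcl T\mbf x}_{\mbf Z} = \ip{\mcl R\mbf u}{\mcl R\mbf u}_{\mbf X}$, which is the claim.

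The only delicate points are bookkeeping. The operator inequality~\eqref{eqn:LMI_lemma_space} must be interpreted as a quadratic-form inequality on $\mbf Z$. The adjoints are taken in $\mbf Z$, not in $\mbf X$. The reality of $\sigma$ is what allows both cross terms to combine into $-2\sigma$. The hypothesis $\sigma>-\omega$ is not needed for the inequality itself; it only guarantees that the resulting constant $\omega+\sigma$ is positive, so that Lem.~\ref{lem:numerical_range} can later be applied with $\omega+\sigma$ in place of $\omega$.
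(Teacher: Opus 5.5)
Your proof is correct and is essentially the paper's argument. The paper adds $-2\sigma\mcl T^*\mcl P\mcl T$ to the LMI and then applies a congruence by $(\mcl A-\sigma\mcl T)^{-1}$ to obtain $\mcl R^*\mcl P+\mcl P\mcl R\le -2(\omega+\sigma)\mcl R^*\mcl P\mcl R$, which is your substitution $\mbf x=(\mcl A-\sigma\mcl T)^{-1}\mbf u$ carried out at the operator level rather than on quadratic forms. Your side remarks are accurate details the paper leaves implicit: the completeness of $\mbf X$, the role of $\sigma$ being real, and the fact that $\sigma>-\omega$ is not used for the inequality itself.
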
\vspace{-1.5mm}
\begin{proof}  
    By adding $-2\sigma\mcl T^*\mcl P \mcl T$ to the LMI in~\eqref{eqn:LMI_lemma_space}, we have\vspace{-1.5mm}
\[
\mcl T^* \mcl P (\mcl A - \sigma\mcl T) + (\mcl A - \sigma\mcl T) ^* \mcl P \mcl T \leq -2(\omega + \sigma) \mcl T^* \mcl P \vspace{-1.5mm}\mcl T.
\]
Next, since $ (\mcl A - \sigma\mcl T)$ is invertible, for $\mcl R= \mcl T(\mcl A - \sigma\mcl T)^{-1}$ we have $\mcl R^* \mcl P + \mcl P \mcl R \leq -2(\omega + \sigma)\mcl R^*\mcl P\mcl R.$
Thus, for all $\mbf u \in \mbf X $, we have \vspace{-1.5mm}
\[
    \ip{\mbf u}{\mcl R \mbf u}_{\mbf X} + 
    \ip{\mcl R \mbf u}{\mbf u}_{\mbf X} \leq - 2(\omega + \sigma)\ip{\mcl R \mbf u}{\mcl R \mbf u}_{\mbf X}.\vspace{-7mm}
\]
\end{proof}\vspace{-1.5mm}

Applying Thm.~\ref{thm:transfer_function_error_bounds_without_stability} with the space $\mbf X$ as in Lem.~\ref{lem:new_space} allows us to extend the convergence results of the transfer function approximation.
 
\begin{thm}\label{thm:transfer_function_error_bounds}
     Let $\mbf Z:= \C^n \times L_2([a, b], \C^m)$, $\mcl D\in\C$, $\mcl B \in \mbf Z$ and $\mcl T, \mcl A:\mbf Z \rightarrow \mbf Z$, $\mcl C:\mbf Z \rightarrow \C$ be PI operators. Suppose there exist $\eta, \omega  >0$ and a PI operator $\mcl P = \mcl P^* \geq \eta I$ such that 
     \vspace{-1.5mm}
     \begin{equation}\label{eqn:LMI_thm_final}
     \mcl T^* \mcl P \mcl A + \mcl A^* \mcl P \mcl T \leq -2\omega \mcl T^* \mcl P \mcl T.\vspace{-2.5mm}
     \end{equation}  
     Suppose there exists $\sigma > -\omega$ such that $\mcl A - \sigma \mcl T$ has a bounded inverse. Define $\mcl R\hspace{-0.5mm} := \hspace{-0.5mm}\mcl T(\mcl A - \sigma \mcl T)^{-1}\hspace{-1mm}$.     
    
     Suppose there exist $\{\mbf q_i\}_{i = 1}^r\hspace{-1mm}\subset\hspace{-1mm}  \texttt{span}\{\mcl B, \dots ,\mcl R^{r-1}\mcl B\}$ such that $\ip{\mbf q_i}{\mcl P\mbf q_j}_{\mbf Z} = \delta_{ij}$. 
     Define $H_{ij} = \ip{\mbf q_i}{\mcl P\mcl R \mbf q_j}_{\mbf Z}$,
     \begin{align}\label{eqn:transfer_function_approximation_thm}
         &\hat G(s) = \mcl C(s\mcl T - \mcl A)^{-1} \mcl B + \mcl D\\
         &\hat G_r(s) = \mcl C (\mcl A - \sigma \mcl T)^{-1} \mbf V ((s-\sigma)H - I)^{-1} \mbf V^* \mcl P\mcl B + \mcl D,\notag\\[-7mm]\notag
     \end{align}  
     where $\mbf V:= \bmat{\mbf q_1, \dots , \mbf q_r}$.
    
     Furthermore, let $\Theta \subset \C$ be defined as \vspace{-1.5mm}
          \begin{equation}\label{eqn:Theta_set}
     \Theta = \{z \in \C\;|\; |z + (2\omega + 2\sigma)^{-1}| \leq (2\omega + 2\sigma)^{-1}\}. \vspace{-1.5mm}
     \end{equation}
     Define $M:=\frac{\sqrt{\|\mcl P\|}}{\sqrt{\eta}}\|\mcl C(\mcl A - \sigma \mcl T)^{-1} \| \|\mcl B\|$. \\
     Then, there exists $K \geq0$ such that
     {\small\begin{align}\label{eqn:errorbounds_thm}
     |\hat G_r(s) - \hat G(s)| \leq K M \min_{p \in \C_r[z]} \max_{z \in \Theta} \big| \frac{1}{1 \hspace{-0.5mm}-\hspace{-0.5mm} (s-\sigma) z} - p (z)\big|\\[-7mm]\notag
     \end{align}}
    for all $s \in  \{ s\in \C\;|\; Re(s) > -\omega\}$.
\end{thm}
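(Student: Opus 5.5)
The plan is to apply Thm.~\ref{thm:transfer_function_error_bounds_without_stability} in the weighted Hilbert space $\mbf X$ of Lem.~\ref{lem:new_space} rather than in $\mbf Z$, and then convert norms back to $\mbf Z$.

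\textbf{Step 1: the weighted space.} Since $\mcl P=\mcl P^*\geq \eta I$ is bounded, $\ip{\mbf u}{\mbf v}_{\mbf X}=\ip{\mbf u}{\mcl P\mbf v}_{\mbf Z}$ is an inner product. Its norm is equivalent to the $\mbf Z$-norm:
\[
\eta\|\mbf u\|_{\mbf Z}^2\leq\|\mbf u\|_{\mbf X}^2\leq\|\mcl P\|\|\mbf u\|_{\mbf Z}^2 .
\]
So $\mbf X$ is a complex Hilbert space, and $\mcl R$, $\mcl B$ and $\hat{\mcl C}=\mcl C(\mcl A-\sigma\mcl T)^{-1}$ remain bounded on it.

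\textbf{Step 2: identify the Krylov objects.} The $\mbf q_i$ are $\mbf X$-orthonormal and span $\mcl K_r(\mcl R,\mcl B)$. The matrix $H_{ij}=\ip{\mbf q_i}{\mcl P\mcl R\mbf q_j}_{\mbf Z}=\ip{\mbf q_i}{\mcl R\mbf q_j}_{\mbf X}$ is exactly \eqref{eqn:H_VRV} computed in $\mbf X$. The $\mbf X$-adjoint of $\mbf V$ is $\alpha\mapsto(\ip{\mbf q_i}{\cdot}_{\mbf X})_i=\mbf V^*\mcl P$, where $\mbf V^*$ is the $\mbf Z$-adjoint. Hence $\hat G_r$ in \eqref{eqn:transfer_function_approximation_thm} coincides with \eqref{eqn:transfer_function_approximation_PIE_transformed} built in $\mbf X$.

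\textbf{Step 3: apply the earlier results.} Thm.~\ref{thm:transfer_function_error_bounds_without_stability} in $\mbf X$ gives a constant $K$ and the bound
\[
|\hat G_r(s)-\hat G(s)|\leq K\|\hat{\mcl C}\|_{\mbf X\to\C}\|\mcl B\|_{\mbf X}\min_{p}\max_{z\in\Sigma}\Big|\tfrac{1}{1-(s-\sigma)z}-p(z)\Big|,
\]
valid whenever $(s-\sigma)^{-1}\notin\Sigma$. Here $\Sigma$ is the closed numerical range of $\mcl R$ in $\mbf X$. Lem.~\ref{lem:new_space} gives the dissipativity inequality with constant $\omega+\sigma>0$ (positive because $\sigma>-\omega$). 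Lem.~\ref{lem:numerical_range}, applied with $\omega+\sigma$ in place of $\omega$, then gives $\Sigma\subseteq\Theta$. Enlarging the max from $\Sigma$ to $\Theta$ only increases the right-hand side.

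\textbf{Step 4: norm conversion.} Write $\|\hat{\mcl C}\mbf u\|\leq\|\hat{\mcl C}\|\|\mbf u\|_{\mbf Z}\leq\|\hat{\mcl C}\|\eta^{-1/2}\|\mbf u\|_{\mbf X}$ and $\|\mcl B\|_{\mbf X}\leq\|\mcl P\|^{1/2}\|\mcl B\|_{\mbf Z}$. Combining these yields the factor $M$.

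\textbf{Step 5: the frequency region (main obstacle).} We must show that $\mathrm{Re}(s)>-\omega$ implies $(s-\sigma)^{-1}\notin\Theta$. Let $c=\omega+\sigma$. The disk $\Theta$ is the image of the closed half-plane $\{\mathrm{Re}(w)\leq -2c\}$ under $w\mapsto 1/w$, together with the point $0$. Indeed, $|z+\tfrac1{2c}|\leq\tfrac1{2c}$ is equivalent to $|z|^2\leq -\mathrm{Re}(z)/c$, which for $w=1/z$ means $\mathrm{Re}(w)\leq -c$.

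This calculation suggests the correct threshold is $\mathrm{Re}(w)\leq -c$, not $-2c$, so I will redo it carefully. With $w=s-\sigma$, we have $\mathrm{Re}(w)>-\omega-\sigma=-c$, so $1/w\notin\Theta$. The case $s=\sigma$ is harmless, since then $(s-\sigma)z=0$.

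Two further points need checking. First, $s\mcl T-\mcl A$ must be invertible, which holds since $(s-\sigma)\mcl R-I$ is invertible away from the numerical range. Second, the point $z=0\in\Theta$ must be allowed, which is fine because $1-(s-\sigma)\cdot 0=1\neq 0$.

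I expect this boundary bookkeeping, together with verifying that the constant $K$ from the cited Thm.~4.10 does not depend on $s$, to be the main technical care points.
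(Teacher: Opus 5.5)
Your proposal is correct and follows the paper's proof essentially step for step. You apply Thm.~\ref{thm:transfer_function_error_bounds_without_stability} in the $\mcl P$-weighted space of Lem.~\ref{lem:new_space}, identify the $\mbf X$-adjoint of $\mbf V$ as $\mbf V^*\mcl P$, convert norms to get $M$, and obtain $\Sigma\subseteq\Theta$ from Lemmas~\ref{lem:new_space} and~\ref{lem:numerical_range} with $\omega+\sigma$ in place of $\omega$. Your exclusion of $(s-\sigma)^{-1}$ from $\Theta$ via $|z+\tfrac{1}{2c}|\le\tfrac{1}{2c}\iff \mathrm{Re}(1/z)\le -c$ is the same computation as the paper's $|z+2c|>|z|$.

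Drop the initial stray claim about the half-plane $\mathrm{Re}(w)\le -2c$; your own calculation correctly replaces it with $\mathrm{Re}(w)\le -c$. Your remark that $s\mcl T-\mcl A$ is invertible because $(s-\sigma)^{-1}$ lies outside the closed numerical range fills a point the paper leaves implicit.
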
\vspace{-3mm}
\begin{proof} 
In the proof, we verify the conditions of Thm.~\ref{thm:transfer_function_error_bounds_without_stability} to show the error bounds of the transfer function for the space $\mbf X$ as defined in Lem.~\ref{lem:new_space}. Recall, $\mbf X$ is $\C^n \times L_2([a, b], \C^m)$ with the inner product defined as $\ip{\mbf u}{\mbf v}_{\mbf X} = \ip{\mbf u}{\mcl P \mbf v}_{\mbf Z}$. 

Our proof consists of three parts. First, we show that for the space $\mbf X$, the approximation in~\eqref{eqn:transfer_function_approximation_PIE_transformed} has the form as in Eqn.~\eqref{eqn:transfer_function_approximation_thm}. Next, we show that for $\Sigma$ as defined in~\eqref{eqn:Sigma_set}, we have $\Sigma \subseteq \Theta$. Finally, we show that if $Re(s) > -\omega$, then $(s - \sigma)^{-1} \notin \Theta$. Thus, using error bounds as in Thm.~\ref{thm:transfer_function_error_bounds_without_stability}, we conclude the theorem statement.

Let us start with showing that the approximation in~\eqref{eqn:transfer_function_approximation_PIE_transformed} has the form as in~\eqref{eqn:transfer_function_approximation_thm}. Let $\mbf V^*_\mbf X$ be an adjoint of $\mbf V$ for the inner product $\ip{}{}_{\mbf X}$. Since $\mcl P =\mcl P^*$  we have\vspace{-1.5mm}
\begin{equation*}
    \ip{\mbf u}{\mbf V \alpha}_{\mbf X} = \ip{\mbf u}{\mcl P\mbf V \alpha}_{\mbf Z} = (\mbf V^* \mcl P\mbf u )^\dagger \alpha \vspace{-1.5mm}
\end{equation*}
for all $\mbf u \in \mbf X$ and $\alpha \in \C^r$.

Thus, $\mbf V_{\mbf X}^* = \mbf V^* \mcl P$ and for $\hat G_r(s)$ as in~\eqref{eqn:transfer_function_approximation_thm} we have \vspace{-1.5mm}
\begin{equation*}
\hat G_r(s) =  \mcl C (\mcl A - \sigma \mcl T)^{-1} \mbf V ((s-\sigma)H - I)^{-1} \mbf V^*_X\mcl B + \mcl D, \vspace{-1.5mm}
\end{equation*}
which coincides with the approximation in Thm.~\ref{thm:transfer_function_error_bounds_without_stability}.

Furthermore, since  $\sqrt{\eta}\|\mbf u\|_{\mbf Z} \leq \|\mbf u\|_{\mbf X} \leq \sqrt{\|\mcl P\|} \|\mbf u\|_{\mbf Z}$ for all $\mbf u\in \mbf Z$, we have $\| \mcl C (\mcl A - \sigma \mcl T)^{-1}\|_{\mbf X\rightarrow\C} \leq \frac{1}{\sqrt{\eta}} \| \mcl C (\mcl A - \sigma \mcl T)^{-1}\|_{\mbf Z\rightarrow\C}$ and $\|\mcl B\|_{\mbf  X} \leq \sqrt{\|\mcl P\|}\|\mcl B\|_{\mbf Z}$.

Then, by Thm.~\ref{thm:transfer_function_error_bounds_without_stability}, we conclude that 
there exists $K \geq 0$ such that \vspace{-2.5mm}{\small
 \begin{align*}
     &|\hat G_r(s) - \hat G(s)| \leq K M \hspace{-1.5mm}\min_{p \in \C_r[z]} \max_{z \in \Theta} \big| \frac{1}{1 \hspace{-0.5mm}-\hspace{-0.5mm} (s\hspace{-0.5mm}-\hspace{-0.5mm}\sigma)\hspace{-0.5mm} z} \hspace{-0.5mm}-\hspace{-0.5mm} p (z)\big|\notag\\[-7mm]\notag
     \end{align*}}
     for all $s \in \{s \in \C\;| \; (s-\sigma)^{-1} \notin \Sigma\}$
     
Let us show that $\Sigma \subseteq \Theta$. For this task, we use Lemmas~\ref{lem:numerical_range} and~\ref{lem:new_space}. Since the conditions of Lem.~\ref{lem:new_space} are satisfied, for all $\mbf u \in \mbf X$ we have $\ip{\mbf u}{\mcl R \mbf u}_{\mbf X} + 
    \ip{\mcl R \mbf u}{\mbf u}_{\mbf X} \leq - 2(\omega + \sigma)\ip{\mcl R \mbf u}{\mcl R \mbf u}_{\mbf X}$. 
    
Thus, by Lem.~\ref{lem:numerical_range}, we conclude that\vspace{-1.5mm}
\[
\Sigma \subseteq \Theta = \{z \in \C\;|\; |z + (2\omega + 2\sigma)^{-1}| \leq (2\omega + 2\sigma)^{-1}\}. \vspace{-1.5mm}
\]
Now, let us show that if $Re(s) > -\omega$ and $s \neq \sigma$, then $(s-\sigma)^{-1} \notin \Theta$. For $s = \sigma$, the error bounds trivially hold. Let $z = s-\sigma$. Since $Re(z) > -(\omega+\sigma)$, we have $(Re(z) + 2(\omega+\sigma))^2 > Re(z)^2$. Thus, if $Re(z)> -(\omega+\sigma)$, \vspace{-1.5mm}
\[
|z + 2(\omega+\sigma)| >|z|.\vspace{-1.5mm}
\]
Next, dividing by $|z|$ and $2(\omega + \sigma)$, we obtain\vspace{-1.5mm}
\[
|z^{-1} + (2\omega+2\sigma)^{-1}| > (2\omega+2\sigma)^{-1}\vspace{-1.5mm}
\]
Thus, if $Re(z) > -(\omega+\sigma)$ and $z \neq 0$, then $z^{-1} \notin \Theta$. Furthermore, since $s = z+\sigma$, if $Re(s) > -\omega$ and $s \neq \sigma$, we have $(s - \sigma)^{-1} \notin \Theta$.

Finally, by Thm.~\ref{thm:transfer_function_error_bounds_without_stability} we conclude that \vspace{-1.5mm}
     \begin{align*}
     |\hat G_r(s) - \hat G(s)| \leq KM\min_{p \in \C_r[z]} \max_{z \in \Theta} \big| \frac{1}{1 \hspace{-0.5mm}-\hspace{-0.5mm} (s\hspace{-0.5mm}-\hspace{-0.5mm}\sigma)\hspace{-0.5mm} z} \hspace{-0.5mm}-\hspace{-0.5mm} p (z)\big|\notag\\[-7mm]\notag
     \end{align*}
for all $s\in \{s \in \C\; |\; Re(s) > - \omega\}$.
\end{proof}\vspace{-1.5mm}

Thm.~\ref{thm:transfer_function_error_bounds} shows sufficient conditions for convergence of the transfer function approximation. The constant in Thms.~\ref{thm:function_approximation_KS},\ref{thm:transfer_function_error_bounds_without_stability} and~\ref{thm:transfer_function_error_bounds} can always be chosen such that $K \leq 5$~\cite{guttel2010rational, crouzeix2017numerical}.

For finite frequency, we have constructive error bounds.
\begin{cor}\label{cor:finite_Hinf_bounds}
     Suppose that the conditions of Thm.~\ref{thm:transfer_function_error_bounds} are satisfied. Then, for all $\Omega > 0$ \vspace{-1.5mm}
     \begin{equation*}
              \sup_{\nu \in [-\Omega, \Omega]}|\hat G_r(j
              \nu) - \hat G(j
              \nu)| \leq K M \frac{2(\omega + \sigma)}{2\omega + \sigma}\frac{\delta^r}{1 - \delta},\vspace{-1.5mm}
     \end{equation*}
     where  $\delta \hspace{-0.5mm}=\hspace{-0.5mm} \sqrt{\hspace{-0.5mm}\frac{\sigma^2\hspace{-0.1mm} +\hspace{-0.1mm} \Omega^2}{(2\omega\hspace{-0.1mm} +\hspace{-0.1mm} \sigma)^2\hspace{-0.1mm} +\hspace{-0.1mm} \Omega^2}}$.
\end{cor}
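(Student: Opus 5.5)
The plan is to apply Thm.~\ref{thm:transfer_function_error_bounds} at $s=j\nu$ and to bound the polynomial approximation term explicitly. This is allowed since $Re(j\nu)=0>-\omega$. I will exhibit one concrete polynomial $p\in\C_r[z]$, namely a truncated Taylor (geometric) series of $f(z)=\frac{1}{1-wz}$ with $w:=s-\sigma$, expanded about the center $c:=-(2\omega+2\sigma)^{-1}$ of the disk $\Theta$. Its maximal error on $\Theta$ upper-bounds the min–max quantity in~\eqref{eqn:errorbounds_thm}, and hence bounds $|\hat G_r(j\nu)-\hat G(j\nu)|$ by $KM$ times that error. Throughout, set $\rho:=(2\omega+2\sigma)^{-1}$, the radius of $\Theta$, and note that $\omega+\sigma>0$ because $\sigma>-\omega$.

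\textbf{Step 1 (expansion).} Write $1-wz=(1-wc)-w(z-c)$. Then
\[
f(z)=\frac{1}{1-wc}\cdot\frac{1}{1-q(z)},\qquad q(z):=\frac{w(z-c)}{1-wc}.
\]
Take $p(z):=\frac{1}{1-wc}\sum_{k=0}^{r-1}q(z)^k$. This has degree $r-1$ in $z$, so $p\in\C_r[z]$. Whenever $|q(z)|<1$, the geometric tail gives
\[
|f(z)-p(z)|\le \frac{1}{|1-wc|}\frac{|q(z)|^r}{1-|q(z)|}.
\]

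\textbf{Step 2 (uniform bounds on $\Theta$).} For $z\in\Theta$ we have $|z-c|\le\rho$. Since $1-wc=1+w\rho=\rho\,(2(\omega+\sigma)+w)$, it follows that $|q(z)|\le \frac{|w|}{|w+2(\omega+\sigma)|}$. With $w=j\nu-\sigma$ one computes
\[
|w|^2=\sigma^2+\nu^2,\qquad |w+2(\omega+\sigma)|^2=(2\omega+\sigma)^2+\nu^2 .
\]
Hence $|q(z)|\le\delta(\nu):=\sqrt{\frac{\sigma^2+\nu^2}{(2\omega+\sigma)^2+\nu^2}}$. Because $(2\omega+\sigma)^2-\sigma^2=4\omega(\omega+\sigma)>0$, the ratio is increasing in $\nu^2$ and strictly less than $1$. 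Therefore $\delta(\nu)\le\delta(\Omega)=\delta<1$ for all $|\nu|\le\Omega$.

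For the prefactor, $\frac{1}{|1-wc|}=\frac{2(\omega+\sigma)}{\sqrt{(2\omega+\sigma)^2+\nu^2}}\le\frac{2(\omega+\sigma)}{2\omega+\sigma}$. Here we use that $2\omega+\sigma>\omega>0$. Since $t\mapsto t^r/(1-t)$ is increasing on $[0,1)$, combining the two bounds gives
\[
\max_{z\in\Theta}|f(z)-p(z)|\le\frac{2(\omega+\sigma)}{2\omega+\sigma}\cdot\frac{\delta^r}{1-\delta}.
\]

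\textbf{Step 3 (conclusion).} Insert this bound into~\eqref{eqn:errorbounds_thm} and take the supremum over $\nu\in[-\Omega,\Omega]$; the right-hand side no longer depends on $\nu$, which gives the corollary.

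The main obstacle is the computation in Step 2. It requires checking that the ratio $|w|/|w+2(\omega+\sigma)|$ evaluates exactly to the stated $\delta(\nu)$, and that this ratio is monotone in $|\nu|$ and stays below $1$. Both facts use $\omega>0$ and $\sigma>-\omega$, which are exactly the hypotheses of Thm.~\ref{thm:transfer_function_error_bounds}. The rest of the argument is routine geometric-series bookkeeping.
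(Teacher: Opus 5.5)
Your proposal is correct and is essentially the paper's proof. Expanding about the center $c$ of $\Theta$ gives $q(z)=\rho(\nu)\hat z$, with $\hat z=2(\omega+\sigma)z+1$, $\rho(\nu)=\frac{j\nu-\sigma}{2\omega+\sigma+j\nu}$ and the same prefactor $\frac{2(\omega+\sigma)}{2\omega+\sigma+j\nu}$, so you have the paper's substitution and truncated geometric series. Your explicit checks that $|\rho(\nu)|$ increases in $|\nu|$, that $t\mapsto t^r/(1-t)$ is increasing, and that $2\omega+\sigma>0$ supply steps the paper only asserts.
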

\begin{proof}
    In this proof, we will show that
    \[
   \hspace{-2mm} \sup_{\nu \in [-\Omega, \Omega]}\min_{p \in \C_r[z]} \max_{z \in \Theta} \big| \frac{1}{1 \hspace{-0.5mm}-\hspace{-0.5mm} (j\nu\hspace{-0.5mm}-\hspace{-0.5mm}\sigma)\hspace{-0.5mm} z} - p (z)\big| \leq \frac{2(\omega + \sigma)}{2\omega + \sigma}\frac{\delta^r}{1 - \delta}.\vspace{-1.5mm}
    \]
    Let $\hat z = 2(\omega + \sigma) z + 1$, then $|\hat z| \le 1$ and \vspace{-1.5mm}
{\small\[
    \frac{1}{1 \hspace{-0.5mm}-\hspace{-0.5mm} (j\nu\hspace{-0.5mm}-\hspace{-0.5mm}\sigma)z} = \frac{2(\omega + \sigma)}{2\omega + \sigma  +  j\nu} \frac{1}{1 - \frac{j\nu - \sigma}{2\omega + \sigma  +  j\nu} \hat z}.\vspace{-1.5mm}
    \]}
    Let $\rho(\nu) = \frac{j\nu - \sigma}{2\omega + \sigma  +  j\nu}$. Then, since $\omega > 0$ and $ \omega + \sigma > 0$, for all $\nu \in \R$, we have $|\rho(\nu)| < 1$.

    Let $p_0(\hat z):= \sum_{i = 0}^{r- 1} \rho(\nu)^i \hat z^i$. Then, since $|\rho(\nu) \hat z| < 1$, using the Taylor expansion, we have for all $\nu \in \R$ \vspace{-1.5mm}
    {\small\[
    \max_{|\hat z| \le 1} \big|\frac{1}{1 - \rho(\nu) \hat z} - p_0(\hat z)\big| \leq \frac{|\rho(\nu)|^r}{1 - |\rho(\nu)|}.\vspace{-1.5mm}
    \]}
    Finally, let $p_\nu(z) = \frac{2(\omega + \sigma)}{2\omega + \sigma  +  j\nu} p_0(2(\omega + \sigma) z + 1)$. Then, since $\sup_{\nu \in [-\Omega, \Omega]} |\rho(\nu)| = |\frac{j\Omega - \sigma}{2\omega + \sigma  +  j\Omega}|$,  we observe that \vspace{-1.5mm}
    {\small\[
    \sup_{\nu \in [-\Omega, \Omega]}\min_{p \in \C_r[z]} \max_{z \in \Theta} \big| \frac{1}{1 \hspace{-0.5mm}-\hspace{-0.5mm} (j\nu\hspace{-0.5mm}-\hspace{-0.5mm}\sigma)\hspace{-0.5mm} z} - p_\nu (z)\big| \leq \frac{2(\omega + \sigma)}{2\omega + \sigma}\frac{\delta^r}{1 - \delta},\vspace{-1.5mm}
    \]}
    which concludes the corollary.
\end{proof}

\subsection{State-Space Representation of the Approximation}

For practical usage of the rational approximation in~\eqref{eqn:transfer_function_approximation_PIE_transformed}, a state-space representation may be required.\vspace{-1.5mm}
\begin{align} \label{eqn:ODE_approximation}
    H \dot x(t) &= (I + \sigma H) x(t) + B_a u(t) \\
    y(t) & = C_a x(t) + D_a u(t),\notag\\[-7mm]\notag
\end{align}
where $x(t) \in \C^r$ is the ODE state, $u(t) \in \C$ is the input, $y(t) \in \C$ is the output and $C_a: = \mcl C (\mcl A -\sigma \mcl T)^{-1}\mbf V \in \C^{1 \times r}$, $B_a := \mbf V^* \mcl P\mcl B  \in \C^r$ and $D_a :=\mcl D$ ($\mbf V$, $H$ and $\mcl P$ as defined in Thm.~\ref{thm:transfer_function_error_bounds}). 

The exponential stability of the reduced-order system in~\eqref{eqn:ODE_approximation} is established as follows.
\begin{cor}\label{cor:approximation_is_stable}
     Given bounded linear operators $\mcl T, \mcl A, \mcl B, \mcl C, \mcl D$, suppose that the conditions of Thm.~\ref{thm:transfer_function_error_bounds} are satisfied. Let $\mcl T$ be injective and $H = \mbf V^* \mcl P \mcl R \mbf V$. Then $H$ is invertible, and the system in~\eqref{eqn:ODE_approximation} is exponentially stable.
\end{cor}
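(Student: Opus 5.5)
The plan is to transfer the dissipativity inequality of Lem.~\ref{lem:new_space} from $\mcl R$ to the compression $H$, and then use it in two ways: first to show $H$ is invertible, then to obtain a quadratic Lyapunov function for the reduced system.

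\textbf{Compressed inequality.} By Lem.~\ref{lem:new_space}, in the space $\mbf X$ with $\ip{\mbf u}{\mbf v}_{\mbf X}=\ip{\mbf u}{\mcl P\mbf v}_{\mbf Z}$ we have $\mcl R^*_{\mbf X}+\mcl R\le -2(\omega+\sigma)\mcl R^*_{\mbf X}\mcl R$ with $\omega+\sigma>0$. Since $\{\mbf q_i\}$ is $\mbf X$-orthonormal, $\mbf V^*_{\mbf X}\mbf V=I$ and $H=\mbf V^*_{\mbf X}\mcl R\mbf V$. For $x\in\C^r$ set $\mbf u=\mbf Vx$. Then
\[
x^\dagger(H+H^\dagger)x=\ip{\mbf u}{\mcl R\mbf u}_{\mbf X}+\ip{\mcl R\mbf u}{\mbf u}_{\mbf X}\le-2(\omega+\sigma)\|\mcl R\mbf u\|_{\mbf X}^2 .
\]
Because $\mbf V\mbf V^*_{\mbf X}$ is an orthogonal projection, $\|\mcl R\mbf u\|_{\mbf X}\ge\|\mbf V^*_{\mbf X}\mcl R\mbf u\|=\|Hx\|$. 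This gives
\[
H+H^\dagger\le-2(\omega+\sigma)H^\dagger H .
\]

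\textbf{Invertibility.} Suppose $Hx=0$ with $x\neq0$. The compressed inequality then gives only $0\le-2(\omega+\sigma)\|\mcl R\mbf u\|^2_{\mbf X}$, which forces $\mcl R\mbf u=0$. Now $\mcl R=\mcl T(\mcl A-\sigma\mcl T)^{-1}$, where $(\mcl A-\sigma\mcl T)^{-1}$ is a bijection and $\mcl T$ is injective, so $\mcl R$ is injective. Hence $\mbf u=\mbf Vx=0$, and since the $\mbf q_i$ are independent, $x=0$. This is exactly where the injectivity hypothesis on $\mcl T$ enters. I expect this step to be the main obstacle, because the finite-dimensional inequality alone does not exclude a kernel of $H$; one has to return to the full operator inequality rather than the compressed one.

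\textbf{Stability.} Since $H$ is invertible, \eqref{eqn:ODE_approximation} with $u=0$ reads $\dot x=Fx$ with $F:=H^{-1}(I+\sigma H)=H^{-1}+\sigma I$. Let $w=H^{-1}y$ in the compressed inequality. Using $H^{-1}+H^{-\dagger}=H^{-\dagger}(H^\dagger+H)H^{-1}$, this gives
\[
H^{-1}+H^{-\dagger}\le-2(\omega+\sigma)I .
\]
Therefore $F+F^\dagger\le-2(\omega+\sigma)I+2\sigma I=-2\omega I$. With the Lyapunov function $V(x)=\|x\|^2$ we get $\dot V\le-2\omega V$, so $\|x(t)\|\le e^{-\omega t}\|x(0)\|$, which is exponential stability with rate $\omega$. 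The inputs and outputs are bounded maps, so this is internal stability of the state-space realization, which is what the claim requires.
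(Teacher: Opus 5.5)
Your proposal is correct and follows the paper's proof essentially step for step. You compress the inequality of Lem.~\ref{lem:new_space} through the $\mbf X$-orthogonal projection $\mbf V\mbf V^*_{\mbf X}$ to get $H+H^\dagger\le-2(\omega+\sigma)H^\dagger H$, rule out a kernel of $H$ by returning to the uncompressed inequality and using injectivity of $\mcl R$, and then apply congruence by $H^{-1}$ to obtain $A_a+A_a^\dagger\le-2\omega I$ for $A_a=H^{-1}+\sigma I$. The differences from the paper are cosmetic: you argue invertibility through a kernel vector rather than negative definiteness of $H+H^\dagger$, and you spell out the congruence step that the paper leaves implicit.
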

\begin{proof}
    Let $\mbf X$, $H$ and $\mbf V^*_{\mbf X}$ be defined as in the proof of Thm.~\ref{thm:transfer_function_error_bounds}. Recall that $\mbf V^*_{\mbf X} = \mbf V^*\mcl P $ and $H = \mbf V^* \mcl P \mcl R \mbf V = \mbf V^*_{\mbf X} \mcl R \mbf V$.
    
    Then, as shown in the proof of Thm.~\ref{thm:transfer_function_error_bounds} , we have\vspace{-1.5mm}
    \[
    \ip{\mbf u}{\mcl R \mbf u}_{\mbf X} + \ip{\mcl R\mbf u}{\mbf u}_{\mbf X} \leq - 2(\omega+\sigma)\ip{\mcl R \mbf u}{\mcl R \mbf u}_{\mbf X}\vspace{-1.5mm}
    \]
    for all $\mbf u \in \mbf X$. Thus, for any $q \in \C^r$ we have\vspace{-1.5mm}
    \[
    q^\dagger\big(\mbf V^*_{\mbf X} \mcl R \mbf V\hspace{-0.8mm}+\hspace{-0.8mm} (\mbf V^*_{\mbf X} \mcl R \mbf V)^\dagger\big) q \hspace{-0.8mm}\leq\hspace{-0.8mm} -2(\omega+\sigma)  q^\dagger \ip{\mcl R \mbf V}{\mcl R \mbf V}_{\mbf X}q.
    \]
    Next, since $\mbf V\mbf V^*_{\mbf X} $ is a projection and $H = \mbf V^*_{\mbf X} \mcl R \mbf V$, we have that $\|H q\|^2 = \|\mbf V^*_{\mbf X} \mcl R \mbf V q\|^2 \leq \|\mcl R \mbf V q\|^2$. 

    Therefore, $-\ip{\mcl R \mbf V}{\mcl R \mbf V}_{\mbf X} \leq -H^\dagger H$ and \vspace{-1.5mm}
    \begin{equation}\label{eqn:LMI_H}
    H^\dagger + H \leq -2(\omega + \sigma)H^\dagger H.\vspace{-1.5mm}
    \end{equation}
    Next, since $\mcl T$ is injective, we have that $\mcl R$ is injective.
    Thus, $\ip{\mcl R \mbf Vq}{\mcl R \mbf Vq}_{\mbf X} > 0$ for all $q \neq 0$, which implies $q^\dagger(H^\dagger + H)q < 0$ for all $q \neq 0$. Thus, $H$ is invertible.

    To show that the system in~\eqref{eqn:ODE_approximation} is stable, we define the state-space matrix $A_a := H^{-1} + \sigma I$. Then, Eq.~\eqref{eqn:LMI_H} implies that $A_a^\dagger + A_a \leq - 2\omega I$,
    which implies exponential stability of the reduced-order system in~\eqref{eqn:ODE_approximation}.
\end{proof}

\section{Application to PDE transfer functions}\label{sec:PDE_transfer_function}

In the previous sections, we considered the input-output map modeled by the PIE system in~\eqref{eqn:PIE}. In this section, we apply the results in Sec.~\ref{sec:error_bounds} to the PDE transfer functions. Specifically, let us consider an input-output map modeled by a PDE of the form\vspace{-1.5mm}
\begin{align}\label{eqn:PDE}
    \dot {\mbf v}(t) & = A \mbf v(t) + Bu(t) \qquad \mbf v(0) = 0 \\
    y(t) &= C\mbf v(t) + D u(t),\notag\\[-7mm]\notag
\end{align}
where $u(t) \in \C$, $y(t) \in \C$ are the input and output of the system. $\mbf v(t) \in Dom(A)$ is the PDE state. $A:Dom(A)\rightarrow L_2([a, b], \C^d)$ is a linear differential operator, $B:\C \rightarrow L_2([a, b], \C^d)$, $C:Dom(A) \rightarrow \C$ are bounded linear operators, $D \in \C$ and $Dom(A)$ is defined as\vspace{-1.5mm}
\begin{align}\label{eqn:BCs}
Dom(A) \hspace{-0.5mm}=\hspace{-0.5mm} \bigg\{\hspace{-0.5mm} \mbf v \hspace{-0.5mm}\in \hspace{-0.5mm}L_2([a, b], \C^d)\big|& \partial_\theta^{n} \hspace{-0.5mm}\mbf v\hspace{-0.5mm}\in \hspace{-0.5mm}L_2([a, b], \C^d), \hspace{-3mm}\\[-2mm]
&\hspace{-22mm}{\scriptsize M_1 \bmat{\mbf v(t, a) \\ \vdots \\ \mbf  \partial_{\theta}^{n-1}\mbf v(t, a)} + M_2 \bmat{\mbf v(t, b) \\ \vdots \\ \mbf  \partial_{\theta}^{n-1}\mbf v(t, b)} = 0}\bigg\}, \notag\\[-6mm]\notag
\end{align}
 where $M_1, M_2 \in \C^{dn\times dn}$.
 
 Operators $A, B, C$ have the form \vspace{-1.5mm}
{\small\begin{align}\label{eqn:operators_ABC}
(A \mbf v)(\theta) &= \sum_{i = 0}^{n} A_i(\theta) \partial_\theta^i \mbf v(\theta), \qquad \theta \in [a, b]\\[-1mm]
C \mbf v &= \int_a^b c(\theta) \partial_\theta^{n}\mbf v(\theta) d\theta, \qquad
(Bu)(\theta) = b(\theta) u.\notag\\[-7mm]\notag
\end{align}}

Let the $C_0$-semigroup generated by operator $A$ be exponentially stable, and let $\omega$ be an exponential decay rate of the semigroup. Then the transfer function of the PDE~\eqref{eqn:PDE} is defined as \vspace{-2.5mm}
\begin{equation}\label{eqn:transferfunction_PDE}
    \hat G(s) = C(s I - A)^{-1} B + D,\vspace{-1.5mm}
\end{equation}
for all $s \in \{ s \in \C\; | \; Re(s) >  -\omega \}$ (Lem 4.3.6 in~\cite{curtain2020introduction}).

\subsection{Equivalence between PDE and PIE transfer functions}\label{subsec:PIE_transfer_function}

As discussed in~\cite{peet2021partial}, the equivalence between the PDE solution and the PIE solution is established by introducing an invertible map $\mcl T:L_2[a,b]\rightarrow Dom(A)$, where $Dom(A)$ is as in Eqn.~\eqref{eqn:BCs}. The operator $\mcl T$ is defined as the inverse of the differential operator $\partial_\theta^n :Dom(A)\rightarrow L_2[a, b]$. For suitable boundary conditions, the operator $\mcl T$ is a PI operator~\cite{peet2021partial}. Furthermore, let operator $\mcl A, \mcl B, \mcl C, \mcl D$ in~\eqref{eqn:PIE} be given as\vspace{-1.5mm}
\begin{align}\label{eqn:PDE_to_PI_operators}
    \mcl A:= A \circ \mcl T, \quad \mcl C:= C\circ\mcl T, \quad \mcl B:= B, \quad \mcl D = D.\\[-8mm]\notag
\end{align}
Then $\mcl A, \mcl B, \mcl C, \mcl D$ are also PI operators, and the states of the PIE in Eqn.~\eqref{eqn:PIE} represent the PDE states in~\eqref{eqn:PDE} as  \vspace{-1.5mm}
\[
\mbf x(t, \theta) = \partial_\theta^n \mbf v(t, \theta) \qquad \text{and} \qquad\mbf v(t) = \mcl T \mbf x(t).\vspace{-1.5mm}
\]

In the next theorem, we establish the equivalence between PDE and PIE transfer functions in~\eqref{eqn:transferfunction_PDE} and ~\eqref{eqn:transferfunction_PIE}.
 
\begin{thm}\label{thm:transfer_functions_equivalence}
   Let $A:Dom(A)\rightarrow  L_2([a,b], \C^n)$ be an infinitesimal generator of a $C_0$-semigroup with growth bound $\alpha$. Suppose that $B:\C \rightarrow  L_2([a,b], \C^n)$, $C: Dom(A) \rightarrow \C$ are bounded linear operators and $D\in \C$.
   Suppose that $\mcl  T:L_2([a, b], \C^n)\rightarrow L_2([a, b], \C^n)$ is an injective bounded linear operator with $Range(\mcl T) = Dom(A)$.
   Define $\mcl A, \mcl B, \mcl C,\mcl D$ as in Eqn.~\eqref{eqn:PDE_to_PI_operators}. Then,\vspace{-1.5mm}
   \[
   C(sI - A)^{-1}B + D= \mcl C(s\mcl T - \mcl A)^{-1}\mcl B + \mcl D\vspace{-1.5mm}
   \]
   for all $s \in \{ s\in \C\;| \; Re(s) > \alpha\}$.
\end{thm}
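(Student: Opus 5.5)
The plan is to show that, for every $s$ with $Re(s) > \alpha$, the operator $s\mcl T - \mcl A$ is invertible on $L_2([a,b],\C^n)$ with inverse
\[
(s\mcl T - \mcl A)^{-1} = \mcl T^{-1}(sI - A)^{-1}.
\]
Here $\mcl T^{-1}: Dom(A)\rightarrow L_2$ is the algebraic inverse of $\mcl T$ on its range, which is well defined because $\mcl T$ is injective and $Range(\mcl T) = Dom(A)$. Once this identity holds, substituting $\mcl C = C\circ\mcl T$, $\mcl B = B$ and $\mcl D = D$ gives
\[
\mcl C(s\mcl T - \mcl A)^{-1}\mcl B + \mcl D = C\mcl T\mcl T^{-1}(sI - A)^{-1}B + D = C(sI-A)^{-1}B + D,
\]
using $\mcl T\mcl T^{-1} = I$ on $Dom(A)$ and the fact that $(sI-A)^{-1}B$ lies in $Dom(A)$.

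The first step is to record the resolvent facts from semigroup theory, as in \cite{curtain2020introduction}. Since $A$ generates a $C_0$-semigroup with growth bound $\alpha$, every $s$ with $Re(s)>\alpha$ lies in the resolvent set. Hence $(sI-A)^{-1}$ is a bounded operator from $L_2$ onto $Dom(A)$ and is a bijection between these spaces.

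The second step is the factorization
\[
s\mcl T - \mcl A = (sI - A)\mcl T,
\]
which holds as operators on $L_2$. It is well defined because $\mcl T$ maps $L_2$ into $Dom(A)$, where $A$ acts, and $\mcl A = A\circ\mcl T$. The map $\mcl T: L_2\rightarrow Dom(A)$ is a bijection, and $sI-A: Dom(A)\rightarrow L_2$ is a bijection. Their composition is therefore a bijection of $L_2$, with inverse $\mcl T^{-1}(sI-A)^{-1}$. One checks this directly in both directions:
\[
(sI-A)\mcl T\,\mcl T^{-1}(sI-A)^{-1} = I, \qquad \mcl T^{-1}(sI-A)^{-1}(sI-A)\mcl T = I.
\]

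The main obstacle is whether this inverse is \emph{bounded}, so that $(s\mcl T-\mcl A)^{-1}$ makes sense as in \eqref{eqn:transferfunction_PIE} and composing with $\mcl C$ is legitimate. I would handle it in one of two ways.

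\emph{Route 1 (closed graph).} $\mcl T^{-1}$ is closed as the inverse of a bounded injective operator, and $(sI-A)^{-1}$ is bounded. The composition $\mcl T^{-1}(sI-A)^{-1}$ is then closed and defined on all of $L_2$, so it is bounded by the closed graph theorem.

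\emph{Route 2 (bounded inverse theorem).} Here I would first need $\mcl A = A\mcl T$ to be bounded, as the paper implicitly assumes. Granted that, $s\mcl T - \mcl A$ is a bounded bijection of $L_2$, so its inverse is bounded by the bounded inverse theorem.

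Either route yields the identity for all $Re(s)>\alpha$.
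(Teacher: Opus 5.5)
Your proposal is correct and follows essentially the same argument as the paper. Both use that, for $Re(s)>\alpha$, $\mcl T: L_2 \to Dom(A)$ and $sI - A: Dom(A)\to L_2$ are bijections (via Hille--Yosida), so that $s\mcl T - \mcl A = (sI-A)\mcl T$ is a bijection of $L_2$, and then cancel $\mcl T\mcl T^{-1}$ on $Dom(A)$. Your Route 2 is exactly the paper's, which fills the step you flagged by noting that $\mcl A = A\circ\mcl T$ is closed and everywhere defined, hence bounded by the closed graph theorem, before invoking the bounded inverse theorem. Your Route 1 is an equally valid shortcut, and your explicit two-sided inverse $\mcl T^{-1}(sI-A)^{-1}$ states the paper's somewhat garbled final step more cleanly.
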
\vspace{-1.5mm}
\begin{proof}
    For the proof, we show that\vspace{-1.5mm}
\[
(sI - A)^{-1} \mbf u = \mcl T (s \mcl T -  A  \mcl T)^{-1} \mbf u\vspace{-1.5mm}
\]
for all $s \in  \{ s\in \C\;| \; Re(s) > \alpha\}$ and $\mbf u \in L_2([a, b], \C^n)$.

Then, using operators as in~\eqref{eqn:PDE_to_PI_operators}, we conclude the theorem.

Since $A$ is an infinitesimal generator, we have $A$ is closed and $Dom(A)$ is dense in $L_2([a, b], \C^n)$ (Thm.2.1.10 in~\cite{curtain2020introduction}). Thus, by the Hille-Yosida Theorem (Thm.2.1.12 in~\cite{curtain2020introduction}), $(sI - A)$ has a bounded inverse for all $s \in  \{ s\in \C\;| \; Re(s) > \alpha\}$. 

Next, since $A$ is closed and $\mcl T$ is bounded, we have $\mcl A:= A \circ \mcl T$ is a bounded operator by the closed graph theorem. 

Now, since $\mcl T: L_2([a,b], \C^n) \rightarrow Dom(A)$ is injective and $(sI -A):Dom(A)\rightarrow L_2([a,b], \C^n)$ is bijective, we have that $s\mcl T - \mcl A:L_2([a,b], \C^n) \rightarrow L_2([a,b], \C^n)$ is a bijective bounded linear operator. Hence, by the bounded inverse theorem, $(s\mcl T - \mcl A)^{-1}$ is bounded.

Furthermore, let $\mbf v = (sI - A)^{-1} \mbf u$ for some $\mbf u \in L_2([a, b], \C^n)$ and $Re(s) > \alpha$. Then, since $A:Dom(A) \rightarrow L_2([a, b], \C^n)$, we have $\mbf v \in Dom(A)$. Next, since $\mbf v \in Dom(A)$ and $\mcl T:L_2([a, b], \C^n) \rightarrow Dom(A)$ is invertible, we have $\mbf v =  \mcl T \mcl T^{-1} \mbf v = \mcl T \mcl T^{-1} (sI - A)^{-1} \mbf u$.

Thus, since $s\mcl T - \mcl A$ is bounded and bijective, we have \vspace{-1.5mm}
\[
\mcl T(s \mcl T - \mcl A)^{-1} = \mcl T\mcl T^{-1} (s I - \mcl A)^{-1},\vspace{-1.5mm}
\]
which implies that $C(sI - A)^{-1}B = \mcl C(s\mcl T - \mcl A)^{-1}\mcl B$. \vspace{-1.5mm}
\end{proof}

Thm.~\ref{thm:transfer_functions_equivalence} shows that the PDE transfer function in~\eqref{eqn:transferfunction_PDE} and the PIE transfer function in~\eqref{eqn:transferfunction_PIE} are equivalent. 
Specifically, as discussed in~\cite{curtain2020introduction}, the PDE transfer functions have the form as in Eqn.~\eqref{eqn:transferfunction_PDE} for $Re(s) > - \omega$, where $\omega$ is an exponential decay rate of the $C_0$-semigroup generated by $A$. 
Feasibility of the LMI in~\eqref{eqn:LMI_thm_final}, together with the existence of $\sigma$ such that $\mcl A - \sigma \mcl T$ is boundedly invertible, is sufficient for $\omega$ to be the exponential decay rate, as established in Thm.~12 in~\cite{jagt2026verifying}.   Furthermore, Thm.~\ref{thm:transfer_functions_equivalence} shows that the PIE transfer functions in~\eqref{eqn:transferfunction_PIE} and the PDE transfer functions in~\eqref{eqn:transferfunction_PDE} coincide on the set $\{s \in \C\;|\; Re(s) > -\omega\}$. Thus, the error bounds in Thm.~\ref{thm:transfer_function_error_bounds} can be applied directly to rational approximations of the PDE transfer functions. Furthermore, since $\mcl T$ is injective, the conditions of Cor.~\ref{cor:approximation_is_stable} are satisfied, and the reduced-order system is exponentially stable.

\subsection{Implementation details}\label{sec:implementation}
\label{subsec:implementation}
In this subsection, we briefly discuss how to implement the proposed approach. For this task, suppose we are given a well-posed and exponentially stable PIE system as in~\eqref{eqn:PIE}. The PDE-to-PIE transformation is discussed in~\cite{peet2021partial}. Next, we discuss challenges for obtaining the approximation in ~\eqref{eqn:transfer_function_approximation_PIE_transformed}.


\textbf{Choice of $\sigma$.} Given a well-posed and exponentially stable PIE, the first step is to find $\sigma \in \R$ such that $(\mcl A - \sigma \mcl T)$ is invertible. For simple PDEs, we may derive $\sigma$ and an inverse of $\mcl A -\sigma \mcl T$ analytically.  Thm.~\ref{thm:transfer_function_error_bounds} requires $\sigma > -\omega$. In the numerical examples, we choose $\sigma=0$, which simplifies the construction. However, for general PDEs, the inversion may still require some additional approximation methods~\cite{shivakumar2022dual}.

\textbf{Computing the basis of the Krylov Subspace.} Given a well-posed and exponentially stable PIE, the next step is to find a basis of the Krylov subspace of the space $\mbf X$ as discussed in Sec.~\ref{sec:Arnoldi_iterations}. The inner product in $\mbf X$ is defined by a positive-definite operator $\mcl P$. To compute the orthonormal basis, we use infinite-dimensional Arnoldi iterations, for example as discussed in~\cite{appeltans2023structure}.

 \section{Numerical Examples}\label{sec:numerical_examples}

In the following examples, we consider the problem of rational approximation of PDE transfer functions. For all examples, we first transform the system to PIE representation. Then, we compute rational approximations as given in Thm.~\ref{thm:transfer_function_error_bounds}.

\subsection{Stability Properties of the Reduced-Order System}
In the first example, we demonstrate the stability property of the 
reduced-order system. For this task, we consider the PDE with a parameter $\alpha \in \R$ of the form \vspace{-1.5mm}
\begin{align*}
    &\partial_t \mbf v(t, \theta) = \alpha\mbf v(t, \theta) - \partial_\theta \mbf v(t, \theta) + u(t) \\[-1mm]
    &\mbf v(t, 0)= 0,\qquad y(t) =  \mbf v(t, 1),\\[-7mm] 
\end{align*}

Using the Krylov subspace method,  Cor.~\ref{cor:approximation_is_stable} guarantees the exponential stability of the reduced-order model. Specifically, for any $\alpha \in \R$ we obtain a stable state matrix. For $\alpha = 1$, we have $A_r = \bmat{-1.14 &  1.47\\
   -4.87 &  -5.18}$ with eigenvalues $\lambda(A_r) = -3.16 \pm 1.76j$.

However, the Galerkin projection does not guarantee the stability of the reduced model. Specifically,  let us consider basis functions $\{\sin(\pi k \theta)\}_{k = 1}^\infty$ in $L_2([0, 1])$. The projection of the differential operator $A:= \alpha - \partial_\theta$ onto $\{\sin(\pi \theta), \sin(2\pi \theta)\}$ gives us $A_G = \bmat{\alpha & \frac{8}{3}\\ -\frac{8}{3} & \alpha}$,
which has eigenvalues $\lambda = \alpha \pm \frac{8}{3}j$. Thus, for $\alpha \geq 0$, the reduced-order system is not exponentially stable.

\begin{figure}
    \centering\vspace{2mm}
    \includegraphics[width=0.7\linewidth]{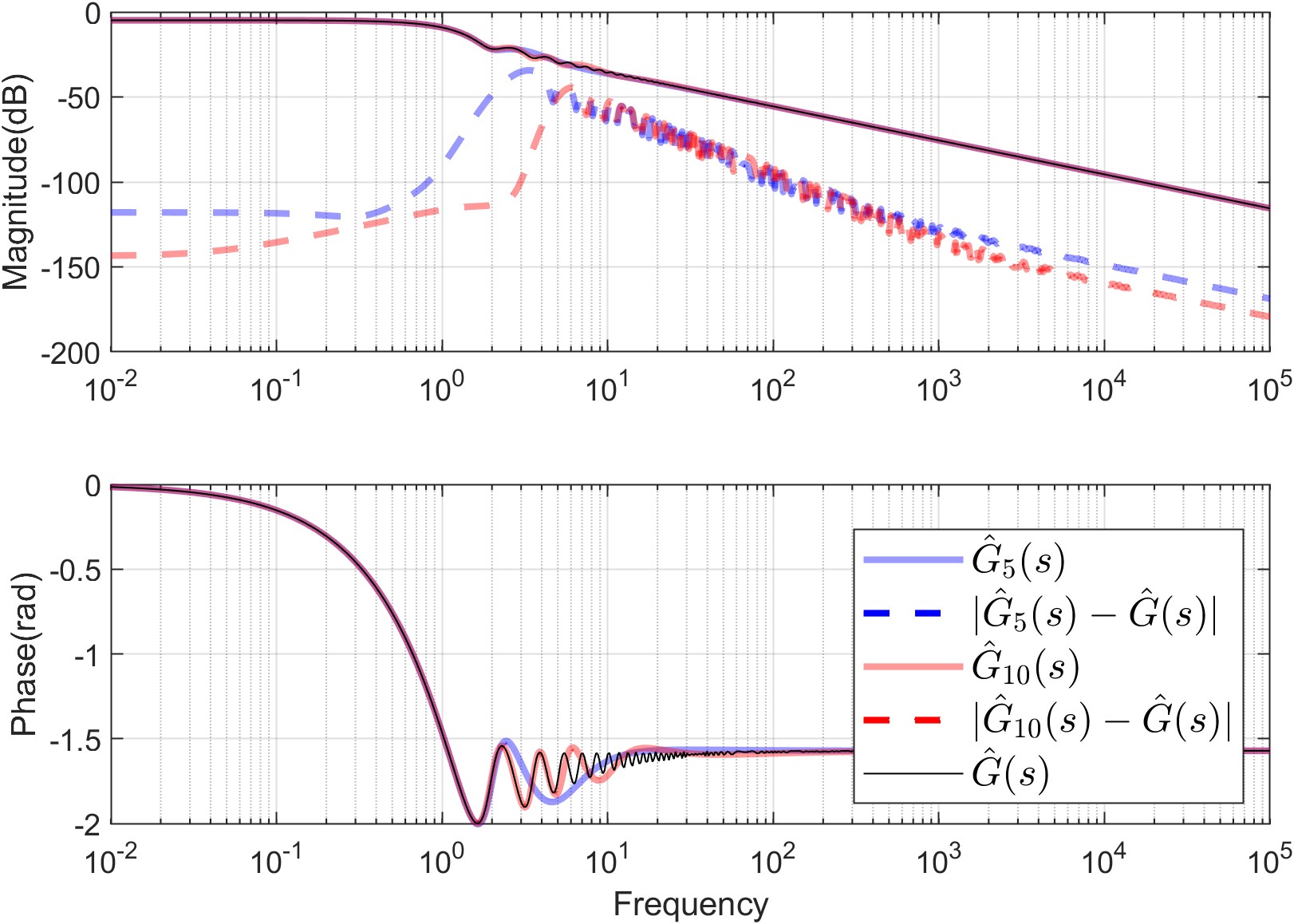}\vspace{-2mm}
    \caption{\small Rational approximations of the transfer function for the hyperbolic PDE in Example~\ref{subsec:example_hyperbolic_PDE}.
    The solid black line represents the transfer function of the PDE. Colored lines represent a rational approximation for $r = 5$ and $10$. The dashed lines are the errors between the exact transfer function and the rational approximation.}
    \label{fig:HYPPDE}\vspace{-5mm}
\end{figure}

\subsection{PDE with spatially varying coefficients}\label{subsec:example_PDE_with_spatially_varying_coefficients}

The goal of this example is to demonstrate the universality of the proposed approach. Specifically, we consider 
the system $G$ modeled by a PDE with spatially varying coefficients\vspace{-1.5mm}
\begin{align*}
\partial_t \mbf v(t, \theta) &= a(\theta)\partial_\theta^2 \mbf v(t, \theta) + b(\theta)\partial_\theta \mbf v(t, \theta) + c(\theta)  \mbf v(t, \theta) + u(t)\\
\mbf v(t, 0) &= \partial_\theta \mbf v(t, 1) = 0 \qquad\qquad
y(t) = \mbf v(t, 1),\\[-7mm]
\end{align*}
where $a(\theta) = \theta^3 + 2 - \theta^2$, $b(\theta) = \partial_\theta a(\theta)$ and\vspace{-1.5mm}
\begin{equation*}
c(\theta) = -0.5\theta^3+1.3\theta^2 - 1.5\theta + 3.03.\vspace{-1.5mm}
\end{equation*}

Although the PDE is exponentially stable~\cite{peet2026hierarchical}, it is difficult to obtain the analytic transfer function of the PDE. Thus, in this example, we evaluate the approximation by computing the $H_\infty$ norm for the system difference. Specifically, using Thm.~\ref{thm:transfer_function_error_bounds}, we construct a $4^{th}$ order rational approximation  ($G_4$) with the state space representation as follows\vspace{-1.5mm}
\begin{align*}
    \dot x(t) \hspace{-0.5mm}&=\hspace{-1.5mm} \scriptsize\bmat{-21.68 \hspace{-0.5mm}&\hspace{-0.5mm}  -71.66 \hspace{-0.5mm}&\hspace{-0.5mm}  -81.67 \hspace{-0.5mm}&\hspace{-0.5mm}  -57.92 \\
  -72.53 \hspace{-0.5mm}&\hspace{-0.5mm} -271.37 \hspace{-0.5mm}&\hspace{-0.5mm} -304.08 \hspace{-0.5mm}&\hspace{-0.5mm} -219.87 \\
  -80.42 \hspace{-0.5mm}&\hspace{-0.5mm} -300.91 \hspace{-0.5mm}&\hspace{-0.5mm} -432.13 \hspace{-0.5mm}&\hspace{-0.5mm} -289.24 \\
  -58.23 \hspace{-0.5mm}&\hspace{-0.5mm} -217.90 \hspace{-0.5mm}&\hspace{-0.5mm} -312.92 \hspace{-0.5mm}&\hspace{-0.5mm} -376.49 \\
}\hspace{-1mm}x(t) \hspace{-0.5mm}+\hspace{-1mm}\bmat{ -118.64\\
 -396.76\\
 -439.95\\
 -318.58}\hspace{-1mm}u(t)\\
  y(t) \hspace{-0.5mm}& = \hspace{-0.5mm}\scriptsize\bmat{-0.0970 &    0.0313 &   -0.0055  &   0.0022} x(t)\\[-7mm]
\end{align*}

Using PIETOOLS~\cite{shivakumar2022dual}, we were able to verify that  $\|G - G_4\|_{H\infty} \leq 0.0108$. The error bounds in Cor.~\ref{cor:finite_Hinf_bounds} give us $\sup_{|\omega| \leq 5} |\hat G(j\omega) - \hat G_4(j\omega)| \leq 1.1$. The actual error may be significantly lower than the worst case errors in Cor.~\ref{cor:finite_Hinf_bounds}. Therefore, additional methods as in~\cite{shivakumar2022dual} may be required to compute the $H_\infty$ error bounds.

Note that in this case, we do not have analytic inversion of $\mcl A - \sigma \mcl T$ in the PIE formulation. Thus, the error bound may be affected by the error in the inverse of $\mcl A - \sigma \mcl T$.

\subsection{Comparison with Relevant Methods}\label{subsec:example_hyperbolic_PDE}
\begin{table}[t]\vspace{2mm}
    \begin{tabular}{c|c|c|c|c|c}
    & $r = 5$ & $r = 10$ & $r = 15$ & $r = 20$ & Discretization\\
    \hline
         KS  & $0.019$ & $0.006$ & $0.0029$ & $0.0017$ & none\\
         POD & $0.011$ &$0.009$ & - & - & $100$ nodes \\ 
         POD & $0.0075$ &$0.0020$ & - & - & $1000$ nodes
    \end{tabular}\vspace{-2mm}
    \caption{\small Finite-frequency $H_\infty$ error bounds for reduced-order systems computed by  the POD approach and  the Krylov subspace (KS) model reduction. } \vspace{-5mm}
    \label{tab:placeholder}
\end{table}

In this example, we compare the proposed approach with the balanced Proper Orthogonal Decomposition (POD) approach proposed in~\cite{Singler2011BalancedPF}. Let us consider a system $G$ modeled by the hyperbolic PDE \vspace{-1.5mm}
\begin{align*}
\partial_t w(t, \theta) &= (0.4 \theta - 0.5)\partial_\theta w(t, \theta) + (1-\theta) u(t)\\[-1mm]
w(t, 0) &= 0 \qquad\qquad y(t) = \int_0^1\theta w(t, \theta)d\theta\\[-7mm]
\end{align*}
As shown in~\cite{Singler2011BalancedPF}, the analytic transfer of the PDE has the form\vspace{-5mm}
{\small
\[
\quad \hat G(s) = \int_0^1 d\theta \int_0^\theta d\nu \frac{\theta(0.5-0.4\theta)^{s/0.4} (1 - \nu)}{(0.5 - 0.4 \nu)^{1 + s/0.4}}.\vspace{-1.5mm}
\]}
 The POD approach uses a discretization approach to compute the balanced realization of an infinite-dimensional system. Specifically, using 100 spatial nodes, the POD algorithm finds a $10^{th}$-order system $H_{10}$ such that $\|G - H_{10}\|_{H_\infty} \leq 0.009$. The Krylov subspace model reduction gives us a $10^{th}$-order system $G_{10}$ such that $\|G - G_{10}\|_{H_\infty}\leq 0.006$. For a $5^{th}$-order approximation, the POD approach using $100$ spatial nodes finds a $5^{th}$-order finite-dimensional system $H_5$ such that $\|G - H_{5}\|_{H_\infty} \leq 0.011$. We obtain a $5^{th}$-order system  $\|G - G_{5}\|_{H_\infty}\leq 0.019$.
The POD error bounds have been evaluated numerically for $\omega \in[10^{-3}, 10^2]$.  The comparison is presented in Tab.~\ref{tab:placeholder}.

 The Krylov subspace method processes the PIE system directly and does not require any discretization steps. Thus, the reduced-order system is stable by construction and admits theoretical error bounds. Although the POD approach may achieve lower error bounds (as shown in Tab.~\ref{tab:placeholder}), the approach requires high-order discretization, which can be difficult to achieve for complex systems.  


\subsection{Robust Stability Analysis of the PDE with Uncertainty}
\label{subsec:example_Diffusion}
\begin{figure}
    \centering\vspace{2mm}
    \includegraphics[width=0.7\linewidth]{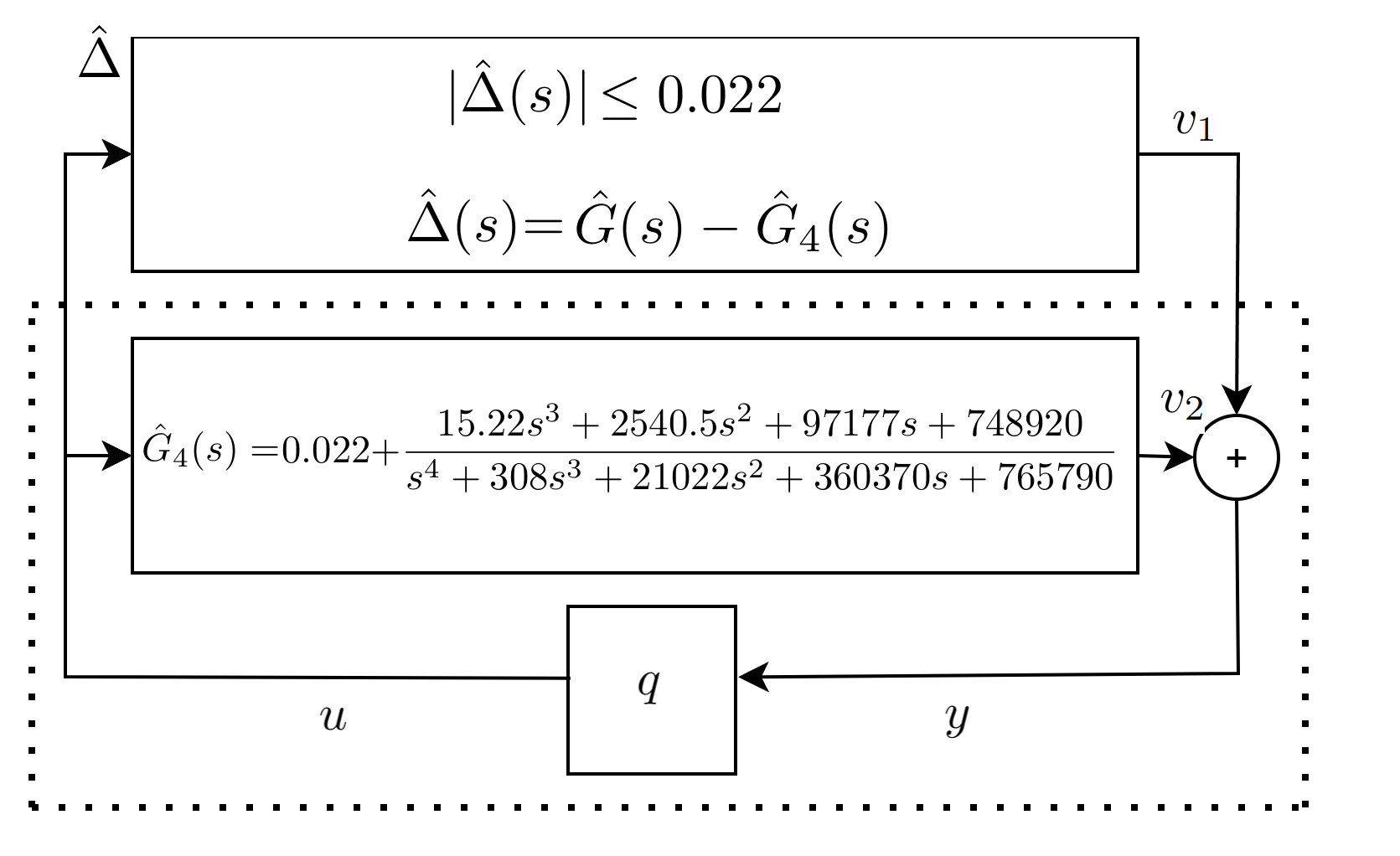}\vspace{-2mm}
    \caption{\small The feedback interconnection representation of the PDE in~\eqref{eqn:PDE_example_Diffusion} for stability analysis. The system $G$ is modeled by the PDE as in Eq.~\eqref{eqn:PDE_approx}. $G_{4}$ is the rational approximation obtained as in Eqn.~\eqref{eqn:diffusion_equation_rational_approximation}. The parameter $q$ is the uncertainty $|q| <\alpha$. The dashed line represents the finite-dimensional block in the interconnection.}
    \label{fig:interconnection}\vspace{-5mm}
\end{figure}
In this example, we apply the Krylov subspace method for robust stability analysis of the diffusion equation with uncertainty in the Robin boundary condition. Specifically, we consider\vspace{-1.5mm}
\begin{align}\label{eqn:PDE_example_Diffusion}
\partial_t \mbf v(t, \theta)& = \partial_\theta^2 \mbf v(t, \theta) \\
\mbf v(t, 0)& = 0,\qquad  \partial_\theta\mbf v(t, 1) - q\mbf v(t, 1) = 0\notag\\[-7mm]\notag
\end{align}
where $q$ is an uncertain parameter, $|q| \leq \alpha$. 
Our goal is to find the largest $\alpha$ such that the system 
is robustly stable.


For this task, we consider a feedback interconnection of the form\vspace{-3mm}
\begin{align*}
\hat{v}_1(s) &= \hat \Delta(s)\hat u(s) := (\hat G(s)-\hat G_{4}(s))\hat{u}(s)\\
    \hat{y}(s) &= \hat G_{4}(s) \hat{u}(s) + \hat{v}_1(s), \quad \hat{u}(s) =  q \hat{y}(s), \\[-7mm]
\end{align*}
where the system $G$ is modeled by the PDE\vspace{-1.5mm}
\begin{align}\label{eqn:PDE_approx}
\partial_t \hat{\mbf w}(t, \theta)& = \partial_\theta^2 \hat{\mbf w}(t, \theta) \qquad v_1(t) = \hat{\mbf w}(t, 1) \\
\hat{\mbf w}(t, 0)& = 0,\qquad  \partial_\theta\hat{\mbf w}(t, 1) = u(t).\notag\\[-7mm]\notag
\end{align}
$G_{4}$ is the $4^{th}$-order approximation of the system $G$ and $q$ is the uncertain parameter. $\hat \Delta(s)$ represents the truncation error introduced by the rational approximation.

As shown in~\cite{curtain2009transfer}, the transfer function of the PDE in~\eqref{eqn:PDE_approx} is $\hat G(s) = \tanh(\sqrt{s})/\sqrt{s}$. The PIE representation of the system in~\eqref{eqn:PDE_approx} has an input $\dot u(t)$. Thus, to obtain the transfer function of the PDE in~\eqref{eqn:PDE_approx}, we approximate $s \mcl C(s\mcl T - \mcl A)^{-1} \mcl B + \mcl D$. A slight variation of Thm.~\ref{thm:transfer_function_error_bounds} gives us a $4^{th}$-order approximation $G_{4}$  of $G$. However, since the conditions of Cor.~\ref{cor:finite_Hinf_bounds} are not satisfied, we evaluate the error bounds numerically at $\omega \in [10^{-10}, 10^{10}]$, which gives us  $|\hat \Delta(s)| = |\hat G(s) - \hat G_{4}(s)| \leq 0.022$.

For reference, the rational approximation has the form\vspace{-1mm}{\footnotesize
\begin{equation}\label{eqn:diffusion_equation_rational_approximation}
\hat G_4 (s) = 0.022 + \frac{  15.22 s^3 + 2540.5 s^2 + 97177 s + 748920 }{s^4 + 308 s^3 + 21022 s^2 + 360370 s + 765790}.
\end{equation}}
Using the LFT transformation, we obtain\vspace{-1.5mm}
\[
\hat v_1(s) = \hat \Delta(s) \hat u(s), \quad \hat u(s) = \frac{q}{1 - q\hat G_4(s)} \hat v_1(s).\vspace{-1.5mm}
\]

 The MATLAB subroutine \texttt{wcgain} shows that\vspace{-1.5mm}
\[
\sup_{\omega\in\R}\big|\frac{q}{1-\hat G_{4}(j\omega) q} \big| \leq 44.5471 < \frac{1}{0.022}\vspace{-1.5mm} \quad \text{ if $|q| \le 0.978$}
\]

Thus, by the small-gain theorem, we conclude the stability of the feedback interconnection and the PDE in~\eqref{eqn:PDE_example_Diffusion} for $q \in [-0.978, 0.978]$.  The higher-order approximation may improve the stability interval. For $16^{th}$ order, we obtain the stability interval $q \in [-0.99, 0.99]$. The theoretical analysis shows that the PDE is exponentially stable for $q < 1$.

\section{Conclusion}
 In this paper, we addressed the problem of computing a rational approximation of transfer functions for PDEs.   For the PIE representation, we constructed a truncated controllability matrix and projected the PIE states onto the vector space spanned by the truncated controllability matrix. The properties of the truncated controllability matrix allowed us to propose a rational approximation of the PIE transfer function. Using sufficient conditions for the exponential stability and well-posedness of the PIE system, we showed that the proposed approximation is stable and converges to the PIE transfer functions. Then, we demonstrated equivalence between the PIE and PDE transfer functions. Finally, the performance of the method was verified using simple numerical examples.




\addtolength{\textheight}{-12cm}   







\bibliographystyle{IEEEtran}
\bibliography{TransferFunctions}



\end{document}